\documentclass[11pt]{article}
\usepackage{euscript}
\usepackage{amsmath}
\usepackage{amssymb}
\usepackage{graphicx}
\usepackage{color}
\definecolor{dgreen}{rgb}{0,.6,0}

\usepackage{theorem}
\usepackage{amsmath}
\usepackage{amsfonts}
\usepackage{amssymb}
\setcounter{page}{1}
\usepackage{latexsym}
\usepackage{rotating}
\setlength{\topmargin}{0in} \setlength{\oddsidemargin}{0cm}
\setlength{\textheight}{22cm} \setlength{\textwidth}{17cm}
\newtheorem{theorem}{\bf Theorem}[section]
\newtheorem{lemma}[theorem]{\bf Lemma}

\newtheorem{coro}[theorem]{\bf Corollary}

\newtheorem{defn}[theorem]{\bf Definition}

\newenvironment{proof}{\noindent{\em Proof:}}{\quad \hfill$\Box$\vspace{2ex}}

\def\TT{{\mathbb T}}

\def\ZZ{{\mathbb Z}}
\def\NN{{\mathbb N}}
\def\RR{{\mathbb R}}
\def\EEm{{\mathbb E}^m}

\def\NNd{{\mathbb N}^d}
\def\RRd{{\mathbb R}^d}

\def\ZZd{{\mathbb Z}^d}
\def\ZZdp{{\mathbb Z}^d_+}
\def\TTd{{\mathbb T}^d}

\def\k2{K^\varphi_2(\TT)}
\def\kpd{\mathcal{H}_{\lambda, p}(\bT^d)}

\def \bN {\Bbb N}
\def \bZ {\Bbb Z}
\def \bE {\Bbb E}

\def \bR {\Bbb R}

\def \bH {\Bbb H}

\def \bB {\Bbb B}
\def \bC {\Bbb C}

\def \bT {\Bbb T}

\def \bY {\Bbb Y}


\def \and {\, \mbox{\rm and}\, }

\def \Re {\,{\rm Re}\,}
\def \Im {\,{\rm Im}\,}

\makeatletter

\newcommand{\Rmnum}[1]{\expandafter\@slowromancap\romannumeral #1@}
\makeatother


\newcommand{\bc}{{\boldsymbol{c}}}

\newcommand{\bj}{{\boldsymbol{j}}}
\newcommand{\bk}{{\boldsymbol{k}}}

\newcommand{\bm}{{\boldsymbol{m}}}

\newcommand{\bp}{{\boldsymbol{p}}}
\newcommand{\bq}{{\boldsymbol{q}}}

\newcommand{\bs}{{\boldsymbol{s}}}
\newcommand{\bt}{{\boldsymbol{t}}}
\newcommand{\bu}{{\boldsymbol{u}}}
\newcommand{\bx}{{\boldsymbol{x}}}
\newcommand{\by}{{\boldsymbol{y}}}
\newcommand{\bw}{{\boldsymbol{w}}}
\newcommand{\bz}{{\boldsymbol{z}}}



\def\TT{{\mathbb T}}

\def\ZZ{{\mathbb Z}}
\def\NN{{\mathbb N}}
\def\RR{{\mathbb R}}
\def\EEm{{\mathbb E}^m}

\def\NNd{{\mathbb N}^d}
\def\RRd{{\mathbb R}^d}

\def\ZZd{{\mathbb Z}^d}
\def\ZZdp{{\mathbb Z}^d_+}
\def\TTd{{\mathbb T}^d}


\newlength{\fixboxwidth}
\setlength{\fixboxwidth}{\marginparwidth}
\addtolength{\fixboxwidth}{-0pt}

\begin{document}
\title{\sffamily Approximation by linear combinations of translates of a single function}
\author{Dinh D\~ung$^a $
   and Vu Nhat Huy$^{b,c}$ \\\\
$^a$ Vietnam National University, Information Technology Institute \\
144 Xuan Thuy, Hanoi, Vietnam  \\\\
$^b$ Hanoi University of Science,  Vietnam National University\\
334 Nguyen Trai, Thanh Xuan, Hanoi, Vietnam\\\\
$^c$ TIMAS, Thang Long University\\
Nghiem Xuan Yem, Hoang Mai, Hanoi, Vietnam\\\\
}
 \tolerance 2500
\maketitle

\begin{abstract}
{We study approximation of periodic functions by arbitrary linear combinations of $n$ translates of a single function.} We construct some linear methods of
this approximation for {univariate} functions in the class induced by the convolution with a single function,
and prove upper bounds of  the $L^p$-approximation convergence rate by these methods, when $n \to \infty$, for $1 \leq p \leq \infty$. We also generalize these results to classes of multivariate functions defined {as} the convolution with the tensor product of a single function.  In the case $p=2$, for this class, we also prove a lower bound of the quantity characterizing  best approximation of by arbitrary linear combinations of $n$ translates of arbitrary function. 

\medskip
\noindent {\bf Keywords:}\ Function spaces induced by the convolution with a given function ; Approximation by arbitrary linear combinations of $n$ translates of a single function.  

\medskip
\noindent {\bf 2010 Mathematics Subject Classifications:} 41A46;
41A63; 42A99.
\end{abstract}


\section{Introduction}

{The present paper continues investigating the problem of function approximation  by arbitrary linear combinations of $n$ translates of a single function  which has been studied in  \cite{1,DH1}.} In the last papers, some linear methods were constructed for
approximation of {periodic} functions in a class induced by the convolution with a given function, and prove upper bounds of  the $L^p$-approximation convergence rate by these methods,  when $n \to \infty$, for the case $1 < p < \infty$. The main technique of { the proofs} of the results is based on Fourier analysis, in particular, the multiplier theory. However, this technique cannot be extended to the two {important} cases $p = 1$ and $p = \infty$. In the present paper, {we  aim at} this approximation problem for the cases $p = 1$ and $p = \infty$ by using a different technique. For convenience of presentation we will do this for $1 \le p \le \infty$.

 We shall begin our discussion here by introducing  notation used throughout the paper. In this  regard,  we merely 
follow  closely  the presentation in \cite{1,DH1}.
The $d$-dimensional torus denoted by $\bT^d$ is the cross product of
$d$ copies of the interval $[0,2\pi]$ with the identification of the end points. When $d=1$, we merely
denote the $d$-torus by $\bT$. Functions on $\bT^d$
are identified with functions on $\RRd$ which are $2\pi$ periodic in
each variable. Denote by $L^p(\TTd), \ 1 \le p \leq \infty$,
the space of integrable functions on $\TTd$ equipped with the norm
\begin{equation}\nonumber
\|f\|_p \ := 
\begin{cases}
(2\pi)^{-d/p}\left(\int_{\TTd} |f(\bx)|^p d \bx\right)^{1/p}, \ &1\leq p<\infty,\\
\text{ess sup}_{\bx \in \TTd} |f(\bx)|, \ & p=\infty.
\end{cases}
\end{equation}
We will consider only real valued functions on $\TTd$. However, all the results in this paper are true for the complex setting. Also, we will use  Fourier series of a real valued function in complex form.  

Here, we use the notation $\mathbb{N}_m$ for
the set $\{1,2,\ldots,m\}$. For vectors $\bx:=(x_l:l\in \mathbb{N}_d)$ and
$\by:=(y_l:l\in \mathbb{N}_d)$ in $\bT^d$ we use
$(\bx,\by):=\sum_{l\in \mathbb{N}_d}x_ly_l$ for the inner product
of $\bx$ with $\by$. 
 Also, for notational
convenience we allow $\mathbb{N}_0$ and $\mathbb{Z}_0$ to stand for the empty set. Given any integrable function
$f$ on $\TTd$ and any lattice vector $\bj=(j_l: l\in \mathbb{N}_d) \in
\ZZd$, we let ${\widehat f}(\bj)$ denote the $\bj$-th Fourier
coefficient of $f$ defined by the equation 
\[
{\widehat f}(\bj) \ := \ (2\pi)^{-d}\int_{\TTd} f(\bx) \,
e^{-i(\bj,\bx) }\, d\bx.
\]
 Frequently, we use the superscript notation $\bB^d$ to  denote
the cross product of $d$ copies of  a given set $\bB$ in $\mathbb{R}^d$. 

Let $S^{'} (\TTd)$ be the space of {distributions} on $\TTd$. Every $f\in S^{'} (\TTd)$ can be identified with the formal Fourier series 
$$
f= \sum_{\bj \in \ZZd} \widehat{f}(\bj) e^{i (\bj, .)},
$$
where the sequence $(\widehat{f}(\bj): \quad \bj \in \ZZd)$ forms a {tempered sequence}.

Let $\lambda: \mathbb{R} \to \mathbb{R} \setminus \{0\}$   be a bounded function. With the univariate  $\lambda $  we associate the multivariate tensor product function  
${\lambda_d}$  given by 
\[ {\lambda_d}({\bx})
 := \
\prod_{l=1}^d \, \lambda (x_l), \quad {\bx}= (x_l:l\in \mathbb{N}_d),
\]
 and introduce the function $\varphi_{\lambda,d}$,   defined on $\bT^d$ by
the equation
\begin{equation}\label{varphi}
\varphi_{\lambda, d}(\bx) \ := \ \sum_{\bj \in \ZZd}
{\lambda_d}({\bj}) \, e^{i({\bj},{\bx})}.
\end{equation}
 Moreover,  in the case that $d=1$ we merely  write $\varphi_\lambda$ for the univariate function $\varphi_{\lambda,1}$.  
We introduce  a subspace of $L^p(\TTd)$  defined as 
\[\mathcal{H}_{\lambda, p}(\bT^d):=\left\{f: f =
\varphi_{\lambda,d}*g, \ g \in  L^p(\TTd)\right\},
\]
 with norm
\[
\|f\|_{\mathcal{H}_{\lambda, p}(\TT^d)} \ := \|{g}\|_p,
\]
where $f_1*f_2$ is 
the convolution of two functions $f_1$ and $f_2$ on
$\bT^d$.

 {As in  \cite{1,DH1}}, we are concerned with the following concept. 
Let {$\mathbb{W}$ be a prescribed subset of $L^p(\TTd)$ and $\psi \in L^p(\TTd)$ be a
given function.} We are interested in the approximation in
$L^p(\TTd)$-norm of all functions $f \in \mathbb{W}$ by arbitrary
linear combinations of $n$ translates of the function $\psi$,
that is,  by the  functions  in the set $\left\{ \psi(\cdot - {\by}_l): \ {\by}_l
\in \TTd, l\in \mathbb{N}_n \right\}$ and measure the error in terms of the quantity
\begin{equation}  \nonumber
M_n(\mathbb{W},\psi)_p
 := \
\sup_{f \in \mathbb{W}} \ \inf\Bigg\{ \ \Bigg\|f - \sum_{l\in \mathbb{N}_n} c_l \psi(\cdot -
{\by}_l)\Bigg\|_p:c_l\in\mathbb{R},{\by}_l\in\TTd\Bigg\}.
\end{equation}
The aim of the present paper is to investigate the convergence rate,
when $n\rightarrow\infty$, of  $M_n(U_{\lambda, p}(\bT^d), \psi)_p$ {for $1 \le p \le \infty$,}
where 
\[
U_{\lambda, p}(\bT^d):=\left\{f\in \mathcal{H}_{\lambda, p}(\bT^d): \quad \|f\|_{\mathcal{H}_{\lambda, p}(\bT^d)}\leq 1  \right\}
\]
 is the unit ball in $\mathcal{H}_{\lambda, p}(\bT^d)$.  We shall
 also obtain a lower bound for the convergence rate as
 $n\rightarrow\infty$ of the quantity
 \begin{equation}  \nonumber
 M_n(U_{\lambda, 2}(\bT^d))_2  := \
 \inf\left\{ M_n(U_{\lambda, 2}(\bT^d),\psi)_2:\psi \in L^2(\TTd)\right\},
 \end{equation}
 which
 gives information about the best choice of $\psi$.

This paper is organized in the following manner.   
In Section \ref{Univariate approximations}, we give the necessary background
from Fourier analysis and  construct a method for approximation of
functions in the univariate case.  {In Section
\ref{Multivariate approximation}, we extend the method of
approximation developed in Section \ref{Univariate approximations} to
the multivariate case, in particular, prove  upper bounds for the
approximation error and convergence rate, we also prove a lower bound of $M_n(U_{\lambda, 2}(\bT^d))_2$.}

\section{Univariate  approximation} \label{Univariate approximations}
{
In this section, we  construct a linear method  in the form of a  linear combination of translates of a function $\varphi_{\beta}$   defined  as in (\ref{varphi}) for approximation of univariate functions in $\mathcal{H}_{\lambda, p}(\bT)$.  We give upper bounds of the approximation error for various $\lambda$ and $\beta$.
}

Let   $\lambda, \beta, \vartheta:\ \RR \to \RR$  be given 2-times continuously differentiable  functions and   $\vartheta$ be such that
\begin{equation}\nonumber
\vartheta(x):=
\begin{cases}
1, & \text{ if }  x \in [ -\frac{1}{2}, \frac{1}{2}],\\
0, & \text{ if }  x \not\in  (-1,1).
\end{cases}
\end{equation}
 Corresponding  to these  functions we define the functions { $\mathcal{G}$ and $H_m$} as 
\begin{equation}\label{Hm} 
\mathcal{G}(x):= \frac{\lambda(x)}{\beta(x)}, \quad H_m(x) :=\sum_{k \in \ZZ} \vartheta(k/m)\mathcal{G}(k) \,e^{ik x}.
\end{equation} 
For a function $f \in \mathcal{H}_{\lambda, p} (\TT)$ represented as $f= \varphi_\lambda * g$, $g \in L^p(\TT)$, we define the operator 
\begin{equation}\label{Qm} 
Q_{m,\beta}(f):= \ \frac{1}{2m+1}\sum_{k=0}^{2m} V_m(g) \left(\frac{k}{2m+1}\right)\varphi_\beta \left(\cdot - \frac{k}{2m+1}\right),
\end{equation}
where   
$
V_m(g):=H_m *g.
$
Finally, we define for a function $h: \RR \to \RR$,
 $$\sigma_m (h;f)(x):=\sum_{k\in \mathbb{Z}} h(k/m)\widehat{f}_k e^{ikx}.$$

{Let us obtain upper  estimates} for the error of approximating a function  $f \in \mathcal{H}_{\lambda, p} (\TT)$ by the trigonometric polynomial $Q_{m,\beta}(f)$ a 
linear combination of $2m+1$ translates of the function $\varphi_\beta$. 

\medskip
{
\begin{defn}
	A 2-times continuously differentiable function $\psi: \mathbb{R} \to \mathbb{R}$ is called a function of monotone type if there exists a positive constant $c_0$ such that 
	$$
	|\psi(x)| \geq c_0 |\psi(y)|, \quad |\psi^{''}(x)| \geq c_0 |\psi^{''}(y)| \quad  \text{for all } \ 2|y| \geq |x| \geq |y|/2.
	$$ 
\end{defn}
}
We put 
\begin{equation*}
 \varepsilon_m := J_m ( {\lambda}) +  \sup_{|x|\in [-m,m]}\left(  |\mathcal{G}(x)|  + m^2\sup_{|x|\in [-m,m]} |\mathcal{G}^{''}(x)|\right)  J_m ( {\beta}),
\end{equation*}
where for a 2-times continuously differentiable function $\psi$, 
$$
J_m(\psi):=\int_{|x| \geq m}\left(\left|\frac{\psi(x)}{m}\right| + \Big|x \psi^{''}(x)\Big|\right) dx. 
$$
\begin{theorem} \label{p=1}
Let $1 \leq p \leq  \infty$. Assume that the  functions $\lambda,\beta$ are of monotone type.  Then there exists a positive constant $c$ such that for all  $f\in \mathcal{H}_{\lambda, p} (\TT)$ and $m\in \mathbb{N}$,
\begin{equation*} 
\|f-Q_{m,\beta}(f)\|_p  \leq c \varepsilon_m \|f\|_{\mathcal{H}_{\lambda, p} (\TT)}.
\end{equation*} 
\end{theorem}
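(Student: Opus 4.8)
The plan is to pass to the Fourier side, to identify $Q_{m,\beta}(f)$ as an explicit Fourier multiplier corrupted by an aliasing remainder, and then to reduce everything to estimating $L^1$-norms of convolution kernels, so that a single application of Young's inequality handles all $p\in[1,\infty]$ at once. This is exactly the device that bypasses the multiplier theory used in \cite{1,DH1}, which breaks down at the endpoints $p=1,\infty$.

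First I would compute the Fourier coefficients of $Q_{m,\beta}(f)$. Writing $f=\varphi_\lambda*g$ gives $\widehat f(j)=\lambda(j)\widehat g(j)$, while $\widehat{V_m(g)}(l)=\vartheta(l/m)\mathcal G(l)\widehat g(l)$ shows $V_m(g)$ is a trigonometric polynomial of degree $<m$. Inserting the Fourier expansion of $\varphi_\beta$ into the definition of $Q_{m,\beta}$ and invoking the discrete orthogonality (aliasing) relation at the $2m+1$ equally spaced nodes, the inner sum collapses to $\widehat{V_m(g)}(l(j))$, where $l(j)\in\{-m,\dots,m\}$ is the balanced residue of $j$ modulo $2m+1$; hence $\widehat{Q_{m,\beta}(f)}(j)=\beta(j)\vartheta(l(j)/m)\mathcal G(l(j))\widehat g(l(j))$. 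For $|j|\le m$ one has $l(j)=j$ and $\beta(j)\mathcal G(j)=\lambda(j)$, so this equals $\vartheta(j/m)\widehat f(j)$, whereas the frequencies $|j|>m$ carry pure aliasing. This yields the decomposition
$$f-Q_{m,\beta}(f)=\big(f-\sigma_m(\vartheta;f)\big)-E,$$
where the first term is the convolution $A_m*g$ with symbol $\widehat{A_m}(j)=(1-\vartheta(j/m))\lambda(j)$, and $E=\sum_{|j|>m}\beta(j)\vartheta(l(j)/m)\mathcal G(l(j))\widehat g(l(j))e^{ij\cdot}$ is the aliasing remainder.

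By Young's inequality $\|f-\sigma_m(\vartheta;f)\|_p\le\|A_m\|_1\|g\|_p$, so I would estimate $\|A_m\|_1$ through the second-difference identity $(2-2\cos x)A_m(x)=-\sum_j\Delta^2[(1-\vartheta(\cdot/m))\lambda](j-1)e^{ijx}$ together with the trivial bound $|A_m(x)|\le\sum_j|(1-\vartheta(j/m))\lambda(j)|$; splitting the torus at $|x|\sim 1/m$ and comparing sums with integrals gives $\|A_m\|_1\lesssim \tfrac1m\int|\lambda|+m\int|\lambda''|\lesssim J_m(\lambda)$, where on the domain $|x|\ge m$ one upgrades $m\int|\lambda''|$ to $\int|x\lambda''|$. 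For the aliasing term I would group $E$ by the low index $l=l(j)$, writing $j=l+n(2m+1)$, to obtain
$$E=\sum_{n\neq0}e^{in(2m+1)x}\,(B^{(n)}*g),\qquad B^{(n)}(x)=\sum_{|l|<m}\vartheta(l/m)\mathcal G(l)\beta(l+n(2m+1))e^{ilx}.$$
Since multiplication by $e^{in(2m+1)x}$ is an $L^p$-isometry, $\|E\|_p\le\big(\sum_{n\neq0}\|B^{(n)}\|_1\big)\|g\|_p$, and each $\|B^{(n)}\|_1$ is again controlled by the second-difference kernel bound applied to the symbol $\gamma_n(t)=\vartheta(t/m)\mathcal G(t)\beta(t+n(2m+1))$. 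Summing over $n\neq0$, the shifted intervals $\{t+n(2m+1):|t|<m\}$ essentially tile $\{|s|\ge m\}$, so the contributions of $\beta$ and $\beta''$ reconstitute the two pieces of $J_m(\beta)$, while the factors $\mathcal G,\mathcal G''$ extracted in sup-norm produce the prefactor $\sup_{|x|\le m}|\mathcal G(x)|+m^2\sup_{|x|\le m}|\mathcal G''(x)|$. Adding the two bounds gives $\|f-Q_{m,\beta}(f)\|_p\le c\varepsilon_m\|g\|_p=c\varepsilon_m\|f\|_{\mathcal H_{\lambda,p}(\TT)}$.

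I expect the main obstacle to be the aliasing estimate, and specifically the cross terms appearing in $\gamma_n''$ (and in the transition region $m/2\le|x|\le m$ for $A_m$) that involve the first derivatives $\mathcal G'$ and $\beta'$, which are absent from $\varepsilon_m$. Controlling them is precisely where the monotone-type hypothesis enters: it forces $\lambda,\beta$ (hence $\mathcal G$) to vary slowly in magnitude across dyadic scales, which simultaneously legitimises the passage from the sums $\sum_l$ to the integrals $\int dt$ and lets one dominate the first derivatives by $\lambda,\lambda''$ (respectively $\beta,\beta''$ and $\mathcal G,\mathcal G''$) via a Landau--Kolmogorov-type interpolation on each interval. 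A secondary technical point will be the uniform convergence of the series over $n$, which rests on the integrability encoded in $J_m(\beta)<\infty$.
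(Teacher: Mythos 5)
Your proposal is correct and, at the structural level, coincides with the paper's proof: the aliasing identity $\widehat{Q_{m,\beta}(f)}(k)=\beta(k)\,\vartheta(k_m/m)\,\mathcal{G}(k_m)\,\widehat{g}(k_m)$ (the paper's coefficient $\gamma(k)$), the split into the low-frequency part $\sigma_m(\vartheta;f)$ (the paper's $\mathcal{C}_m$) plus aliasing blocks indexed by the shifts $n(2m+1)$ (the paper's $\mathcal{A}_m,\mathcal{B}_m$ decomposed over the blocks $I_{j,m}$), and the final passage from sup-norms of $\beta,\beta'',\mathcal{G},\mathcal{G}''$ to the integrals constituting $J_m(\beta)$ and $J_m(\lambda)$ via Landau's inequality and the monotone-type hypothesis are all exactly the paper's steps. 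Where you genuinely deviate is in the workhorse estimate: the paper delegates each block to the Maggioni--Mhaskar multiplier bound of Lemma \ref{lemma1}, $\|\sigma_m(h;f)\|_p\le c_1\|h''\|_\infty\|f\|_p$, cited as a black box, while you re-prove it inline by the second-difference kernel bound together with Young's inequality --- which is essentially how that lemma is established in the first place, and which makes explicit why the whole argument is uniform in $1\le p\le\infty$, including the endpoints. This buys you a second simplification: since your kernel estimate does not require a compactly supported symbol, you can treat $f-\sigma_m(\vartheta;f)$ in one shot via the single symbol $(1-\vartheta(\cdot/m))\lambda$, whereas the paper is forced into the dyadic telescoping $\sum_{j\ge 0}\bigl(\sigma_{2^{j+1}m}(\vartheta;f)-\sigma_{2^{j}m}(\vartheta;f)\bigr)$ precisely because Lemma \ref{lemma1} applies only to symbols supported in $[-1,1]$. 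The two points you flag as obstacles are real but routine and mirror the paper: the cross terms involving $\vartheta'$, $\lambda'$, $\beta'$, $\mathcal{G}'$ require a local (interval) Landau--Kolmogorov inequality --- the paper's global Lemma \ref{lemma2} suffices there only because its rescaled symbols $\mathcal{K}_{j,m}$ and $\Psi_{j,m}$ are compactly supported --- and your sum-to-integral comparisons on the transition zone $m/2\le|x|\le m$ and on the shifted blocks need the widened factor-$4$ monotone-type window, which is exactly the paper's display (\ref{monot}). Net assessment: same decomposition and same mechanism, but your version is self-contained (no imported multiplier lemma) and slightly shorter on the low-frequency term, at the cost of redoing the kernel estimates that the paper cites.
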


Before we give the proof of the above theorem, we recall  a lemma proved in \cite{Mha01}, \cite{Mha02}. 
\begin{lemma}\label{lemma1}
Let $1\leq p\leq \infty$, $f \in L^p(\TT)$ and  $h:\mathbb{R}\to \mathbb{R}$ be  2-times continuously differentiable function, supported on $[-1,1]$. Then there exists a constant $c_1$ independent of $f, h, m$ such that 
 $$\| \sigma_m(h; f)||_p \leq c_1 \|h^{''}\|_\infty \|f\|_p.$$
\end{lemma}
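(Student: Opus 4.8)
The plan is to realize $\sigma_m(h;f)$ as a convolution on the torus and then reduce the entire statement to an $L^1$ bound on the convolution kernel. First I would write $\sigma_m(h;f)(x) = \sum_{k\in\ZZ} h(k/m)\widehat{f}_k\, e^{ikx}$ and substitute the integral formula for $\widehat{f}_k$; since $h$ is supported on $[-1,1]$ the sum runs only over $|k|\leq m$, so the interchange of this finite sum with the integral is trivially justified. This expresses $\sigma_m(h;f) = K_m * f$, where the kernel is the trigonometric polynomial $K_m(t) := \sum_{|k|\leq m} h(k/m)\, e^{ikt}$ of degree at most $m$. By Young's convolution inequality on $\TT$, which holds uniformly for $1\leq p\leq\infty$, one has $\|\sigma_m(h;f)\|_p = \|K_m * f\|_p \leq \|K_m\|_1\,\|f\|_p$, so it suffices to prove $\|K_m\|_1 \leq c_1\|h''\|_\infty$ with $c_1$ independent of $h$ and $m$.

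To bound $\|K_m\|_1$ I would establish two pointwise estimates for $K_m$ and split $\int_\TT |K_m|$ at $|t|=1/m$. For $t$ near the origin I use the crude bound $|K_m(t)| \leq \sum_{|k|\leq m}|h(k/m)| \leq (2m+1)\|h\|_\infty$. The key point is that $\|h\|_\infty$ is itself controlled by $\|h''\|_\infty$: because $h\in C^2(\RR)$ vanishes outside $[-1,1]$, continuity forces $h(\pm 1)=h'(\pm 1)=0$, and integrating $h''$ twice from $-1$ gives $\|h\|_\infty \leq 4\|h''\|_\infty$. Hence $|K_m(t)| \leq C m\|h''\|_\infty$ everywhere, which integrated over $|t|\leq 1/m$ contributes $O(\|h''\|_\infty)$.

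For $t$ away from the origin I use a second-order summation by parts. Writing $a_k := h(k/m)$ and multiplying by $(1-e^{it})^2$ shifts the coefficients into discrete second differences: $(1-e^{it})^2 K_m(t) = \sum_k (\Delta^2 a_{k-2})\, e^{ikt}$, where $\Delta^2 a_j := a_{j+2}-2a_{j+1}+a_j$. The mean-value form of the second difference gives $\Delta^2 a_j = m^{-2} h''(\xi_j)$ for some $\xi_j$, so $|\Delta^2 a_j|\leq m^{-2}\|h''\|_\infty$, and only $O(m)$ of these are nonzero (those with $|j|\lesssim m$). Since $|1-e^{it}|^2 = 4\sin^2(t/2)$, this yields $\sin^2(t/2)\,|K_m(t)| \leq (C/m)\|h''\|_\infty$, that is $|K_m(t)| \leq C\|h''\|_\infty/(m\sin^2(t/2))$. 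Using $\sin(t/2)\gtrsim |t|$ on $\TT$ and integrating this bound over $1/m\leq |t|\leq \pi$ again contributes $O(\|h''\|_\infty)$. Adding the two ranges gives $\|K_m\|_1 \leq c_1\|h''\|_\infty$, which completes the argument.

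The main obstacle I anticipate is the bookkeeping in the summation-by-parts step: one must verify the coefficient identity after the index shift, confirm that the number of surviving second differences is genuinely $O(m)$ (including the boundary terms where $a_k$ transitions to zero), and apply the second-difference mean value theorem correctly with step size $1/m$. The reduction $\|h\|_\infty \leq 4\|h''\|_\infty$ through the vanishing boundary data is the other essential ingredient, since without it the near-origin estimate would only yield a $\|h\|_\infty$ dependence rather than the required $\|h''\|_\infty$ dependence asserted in the statement.
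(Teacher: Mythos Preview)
Your argument is correct. The paper itself does not prove this lemma: it simply recalls it as a known result, citing Maggioni--Mhaskar and Mhaskar (references \cite{Mha01}, \cite{Mha02} in the paper), so there is no in-paper proof to compare against line by line. What you have written is precisely the classical proof that underlies those citations: express $\sigma_m(h;f)=K_m*f$, invoke Young's inequality, and bound $\|K_m\|_1$ by splitting at $|t|\sim 1/m$ and using a second-order Abel summation away from the origin.

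All the steps check out. The reduction $\|h\|_\infty\le C\|h''\|_\infty$ is valid because $h\in C^2(\RR)$ with support in $[-1,1]$ forces $h(\pm1)=h'(\pm1)=0$, and then Taylor's formula from $-1$ gives $|h(x)|\le\tfrac{(x+1)^2}{2}\|h''\|_\infty\le 2\|h''\|_\infty$. The summation-by-parts identity $(1-e^{it})^2K_m(t)=\sum_k(a_k-2a_{k-1}+a_{k-2})e^{ikt}$ is exactly what you wrote, the second-difference mean value theorem $a_{j+2}-2a_{j+1}+a_j=m^{-2}h''(\xi_j)$ follows from two applications of the ordinary mean value theorem, and the count of nonzero second differences is $2m+3=O(m)$ as you say. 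The resulting pointwise bound $|K_m(t)|\le C\|h''\|_\infty/(m\sin^2(t/2))$ integrates to $O(\|h''\|_\infty)$ over $1/m\le|t|\le\pi$, completing the estimate.
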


{We also need a Landau's inequality for derivatives \cite{La13}.}
\begin{lemma} \label{lemma2}
Let $f \in L^\infty(\RR)$  be  $2$-times continuously differentiable function. Then 
$$\|f^{'}\|_\infty^2 \leq 4\|f\|_\infty \|f^{''}\|_\infty.$$
In particular,
$$\|f^{'}\|_\infty \leq \|f\|_\infty + \|f^{''}\|_\infty.$$
\end{lemma}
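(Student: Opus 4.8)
The plan is to prove Theorem \ref{p=1}, the approximation-error bound for the sampling operator $Q_{m,\beta}$, since this is the final theorem/lemma statement in the excerpt (Lemmas \ref{lemma1} and \ref{lemma2} are recalled afterward merely as tools and are attributed to other sources). The strategy is to decompose the error $f - Q_{m,\beta}(f)$ into two pieces: the error between $f$ and a suitable multiplier-modified version of $f$, and the error introduced by replacing that modified function with its sampled interpolant $Q_{m,\beta}(f)$. Throughout I would write $f = \varphi_\lambda * g$ with $\|g\|_p = \|f\|_{\mathcal{H}_{\lambda,p}(\TT)}$, and exploit that both $f$ and $Q_{m,\beta}(f)$ are, in Fourier terms, obtained from $g$ by multiplier-type operations.

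First I would identify the Fourier multiplier of $Q_{m,\beta}(f)$. Since $V_m(g) = H_m * g$ and $\widehat{H_m}(k) = \vartheta(k/m)\mathcal{G}(k)$, the sampled sum in \eqref{Qm} is a quadrature on the $2m+1$ equispaced nodes $k/(2m+1)$ reconstructed against translates of $\varphi_\beta$. Using the aliasing/quadrature identity for the discrete Fourier transform on these nodes together with $\widehat{\varphi_\beta}(k) = \beta(k)$, I expect $Q_{m,\beta}(f)$ to have Fourier coefficients equal to $\beta(k)\,\widehat{V_m(g)}(k)$ for $|k|$ in the relevant range, i.e.\ essentially $\vartheta(k/m)\beta(k)\mathcal{G}(k)\widehat g(k) = \vartheta(k/m)\lambda(k)\widehat g(k)$, plus aliasing contributions from frequencies outside $(-m,m)$. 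Since $\widehat f(k) = \lambda(k)\widehat g(k)$, the difference $f - Q_{m,\beta}(f)$ splits naturally into a ``tail'' term (frequencies where $\vartheta(k/m)\neq 1$, controlled by $J_m(\lambda)$) and an ``aliasing'' term coming from the finite quadrature. The aliasing term is where the factors $\sup|\mathcal{G}|$, $m^2\sup|\mathcal{G}''|$, and $J_m(\beta)$ in $\varepsilon_m$ enter: the translates of $\varphi_\beta$ carry the smoothness of $\beta$ and $\mathcal{G}$, and bounding their high-frequency aliases against $g$ in $L^p$ requires the monotone-type hypothesis to transfer pointwise control of $\lambda,\beta$ from integer arguments to the continuous estimates in $J_m$.

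The key technical device for the $L^p$ bounds, uniformly in $1\le p\le\infty$, is Lemma \ref{lemma1}: each multiplier operation I introduce should be written in the form $\sigma_m(h;\cdot)$ for an explicit $h$ supported on $[-1,1]$ (or a fixed dilate), after which its $L^p\to L^p$ norm is controlled by $\|h''\|_\infty$ rather than by any $p$-dependent multiplier theory — this is precisely what lets the argument cover $p=1$ and $p=\infty$, where the Fourier-multiplier technique of \cite{1,DH1} fails. To produce the $\|h''\|_\infty$ bounds I would apply Lemma \ref{lemma2} (Landau's inequality) to reduce second-derivative estimates of the relevant cut-off/symbol products to products of sup-norms of the symbol and its second derivative, which is how the quantities $\sup|\mathcal{G}|$ and $m^2\sup|\mathcal{G}''|$ arise after rescaling $x\mapsto x/m$. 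The monotone-type assumption on $\lambda$ and $\beta$ is what guarantees that the discretely sampled symbol values $\lambda(k),\beta(k)$ and their second differences are comparable to the continuous $J_m$ integrands, closing the gap between the series definition of $H_m$ and the integral form of $\varepsilon_m$.

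The main obstacle I anticipate is the precise treatment of the aliasing term: unwinding the quadrature in \eqref{Qm} to obtain a clean multiplier representation requires the Poisson-type summation identity $\sum_k \widehat{\varphi_\beta}(k+ (2m+1)\nu)$ over aliases $\nu\in\ZZ$, and then showing that every alias with $\nu\neq 0$ contributes at most $\sup_{|x|\le m}|\mathcal{G}(x)|\cdot(\text{tail of }\beta)$ after the monotone-type comparison. Organizing these infinitely many alias terms so that they collapse into the single factor $(\sup|\mathcal{G}| + m^2\sup|\mathcal{G}''|)\,J_m(\beta)$, while keeping every intermediate operator in the Lemma \ref{lemma1} framework, is the delicate step; the $J_m(\lambda)$ term, by contrast, is the comparatively routine smooth truncation error and should follow directly once the multiplier of $f - \sigma_m(\vartheta;f)$ is estimated.
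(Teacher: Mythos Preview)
You have misidentified the target. The statement in question is Lemma~\ref{lemma2} (Landau's inequality for $f,f',f''$), not Theorem~\ref{p=1}. Your entire proposal is a sketch of the proof of Theorem~\ref{p=1} and says nothing whatsoever about proving $\|f'\|_\infty^2 \le 4\|f\|_\infty\|f''\|_\infty$; it merely \emph{uses} that inequality as an input. So as a proof of the stated lemma it is vacuous.

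For the record, the paper does not give its own proof of Lemma~\ref{lemma2} either: it is quoted as a classical result of Landau \cite{La13}. A self-contained argument is short: for any $x\in\RR$ and $h>0$, Taylor's theorem with integral (or Lagrange) remainder gives
\[
f(x+h)=f(x)+hf'(x)+\tfrac{h^2}{2}f''(\xi_+),\qquad f(x-h)=f(x)-hf'(x)+\tfrac{h^2}{2}f''(\xi_-),
\]
so subtracting yields $|f'(x)|\le h^{-1}\|f\|_\infty + \tfrac{h}{2}\|f''\|_\infty$ (using $|f(x\pm h)|\le\|f\|_\infty$). Minimizing over $h>0$ at $h=\sqrt{2\|f\|_\infty/\|f''\|_\infty}$ gives $\|f'\|_\infty\le\sqrt{2\|f\|_\infty\|f''\|_\infty}$, and hence $\|f'\|_\infty^2\le 4\|f\|_\infty\|f''\|_\infty$ a fortiori (the constant $4$ is not sharp here, but suffices). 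The ``in particular'' line then follows from the arithmetic--geometric mean inequality $2\sqrt{ab}\le a+b$.

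As an aside, your outline of how the paper proves Theorem~\ref{p=1} is broadly accurate in spirit (alias decomposition, reduction to $\sigma_m(h;\cdot)$ operators via Lemma~\ref{lemma1}, Landau to trade $h'$ for $h,h''$, and the monotone-type hypothesis to pass from discrete suprema to the integrals $J_m$), but that is not what was asked.
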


\begin{proof}(Proof of Theorem \ref{p=1})
	Let $f \in \mathcal{H}_{\lambda, p}(\TT)$ be represented as $\varphi_{\lambda,d}*g$ for some $g \in  L^p(\TT)$.
We define { the kernel $P_m(x,t)$}  for $x,t \in \mathbb{T}$ as 
\begin{equation*} 
P_m (x,t)
:= \ 
\frac{1}{2m+1} \sum\limits_{k=0}^{2m}\varphi_\beta \left(x- \frac{k}{2m+1}\right) H_m \left(\frac{k}{2m+1} -t\right). 
\end{equation*} 
It is  easy to   obtain from   the definition (\ref{Qm})  that
\begin{equation} \nonumber
Q_{m,\beta}(f) (x)=\frac{1}{2\pi}\int_{\mathbb{T}} P_m (x,t) g(t)\, dt.
\end{equation} 
We now use  equation (\ref{varphi}),  the definition of the trigonometric polynomial $H_m$ given in equation  (\ref{Hm}) and the easily verified fact,   for $k,s \in \ZZ, s \in [-m,m]$, that
\begin{equation*}
\frac{1}{2m+1}\sum\limits_{\ell =0}^{2m}  e^{i k(t- (\ell/2m+1))}  e^{is((\ell/2m+1) -t)}=
\begin{cases}
0,  \quad& \text { if } \ \frac{k-s}{2m+1} \not\in \mathbb{Z},\\[2ex]
e^{i(k-k_m)t },   \quad & \text{ if } \ \frac{k-s}{2m+1} \in \mathbb{Z},
\end{cases}
\end{equation*}
 to conclude that 
\begin{equation} \nonumber
P_m (x,t)= \ \sum_{k\in \mathbb{Z}}  \gamma(k) {e^{i  k x} } e^{-i k_m t},  
\end{equation} 
where $\gamma(k)=\vartheta (k_m/m)\mathcal{G} ({k_m}) \beta(k)$ and $k_m \in [-m,m]$ satisfy $(k-k_m)/(2m+1) \in \mathbb{Z}$.
Hence,
\[
\begin{split}
Q_{m,\beta}(f)(x)  &=\sum_{k>m}  \gamma(k)  e^{i kx} \widehat{g}({k_m})  + \sum_{k<-m}  \gamma(k)  e^{i kx}  \widehat{g}({k_m})  +  \sum_{k=-m}^m  \gamma(k) e^{i kx}  \widehat{g}({k_m})   \\[2ex] 
&=: \mathcal{A}_m(x) + \mathcal{B}_m(x) +\mathcal{C}_m(x). 
\end{split}
\]
Consequently,
\begin{equation} \label{aa|Q_m(f) -f|_p<}
\|f-Q_{m,\beta}(f) \|_p 
\ \le \  \|\mathcal{A}_m\|_p + \|\mathcal{B}_m\|_p+ 
\|f- \mathcal{C}_m\|_p.
\end{equation}
For each $j \in \NN$,    we define the functions $\Lambda_{j,m} (x), \mathcal{J}_m (x)$, $\mathcal{K}_{j,m}(x)$, $\mathcal{D}_{j,m} (x)$ and {the} set $I_{j,m}$ as follows
{
$$
\Lambda_{j,m} (x):={\beta}(mx + j(2m+1)), \qquad  \mathcal{J}_m (x):= \mathcal{G}(mx), 
$$
$$
\qquad  \mathcal{K}_{j,m}(x):=\Lambda_{j,m} (x) \vartheta(x) \mathcal{J}_m (x), \qquad \mathcal{D}_{j,m} (x):= \sum_{k\in I_{j,m}} \gamma(k) e^{i kx} \widehat{g}(k_m),
$$
$$I_{j,m}: =\{k\in \mathbb{Z}:\quad (2m+1)j - m \leq k\leq (2m+1)j +m  \}.
 $$
}
Then we have
\begin{equation}\label{Am}
\mathcal{A}_m(x)=  
\sum_{j \in \mathbb{N} }\sum_{k\in I_{j,m}} \gamma(k) e^{i kx} \widehat{g}(k_m)=\sum_{j \in \mathbb{N} } \mathcal{D}_{j,m} (x),
\end{equation} and  for all  $k\in I_{j,m}$,
\begin{equation*}
\begin{split}
\gamma(k)&=\beta({k}) \vartheta (k_m/m) \mathcal{G} ({k_m})= {\beta} (j(2m+1)+ k_m) \vartheta (k_m/m)\mathcal{G} ({k_m})
\\[1ex]
&= \Lambda_{j,m} (k_m/m) \vartheta({k_m}/m) \mathcal{G} (k_m) = \Lambda_{j,m} (k_m/m) \vartheta({k_m}/m) \mathcal{J}_m (k_m/m) 
=  \mathcal{K}_{j,m}(k_m/m). 
\end{split}
\end{equation*}
Hence, 
\begin{equation*}
\begin{split} 
\mathcal{D}_{j,m}(x)&=\sum_{k\in I_{j,m}} \gamma(k) e^{i kx} \widehat{g}(k_m) = \sum_{k_m\in [-m,m]} \mathcal{K}_{j,m}(k_m/m) e^{i (j(2m+1)+ k_m )x} \widehat{g}(k_m)  
\\[1ex]
&= e^{i j(2m+1)x} \sum_{k_m\in [-m,m]} \mathcal{K}_{j,m}(k_m/m) e^{i  k_m x} \widehat{g}(k_m)= e^{i j(2m+1)x} \sigma_m (\mathcal{K}_{j,m}; g).
\end{split} 
\end{equation*}
Therefore,  by Lemma \ref{lemma1}, there exists a constant $c_1$ such that  
\begin{equation*}
\begin{split} 
\|\mathcal{D}_{j,m}\|_p \leq  c_1\|(\mathcal{K}_{j,m})^{''}\|_\infty  \|{g}\|_p.
\end{split} 
\end{equation*}
Then it follows from (\ref{Am}) that 
\begin{equation}\label{Dm++}
\| \mathcal{A}_m\|_p\leq \sum_{j \in \mathbb{N} } \| \mathcal{D}_{j,m} \|_p \leq c_1\sum_{j \in \mathbb{N} } \|(\mathcal{K}_{j,m})^{''}\|_\infty  \|{g}\|_p.
\end{equation}
From the definition of $\mathcal{K}_{j,m}$, $\text{supp}\, \vartheta \subset [-1,1]$, and $\|\vartheta\|_\infty \leq 2 \|\vartheta^{'}\|_\infty \leq 4\|\vartheta^{''}\|_\infty,$ we deduce that 
\begin{equation*}
\begin{split} 
 \|(\mathcal{K}_{j,m})^{''}\|_\infty  &\leq 4\|\vartheta^{''}\|_\infty   \sup_{x\in [-1,1]} \left( |\Lambda_{j,m}(x) \mathcal{J}_m (x)| + |(\Lambda_{j,m} \mathcal{J}_m)^{'}(x)| + | (\Lambda_{j,m} \mathcal{J}_m)^{''}(x)| \right) \\[1ex]
&\leq 4\|\vartheta^{''}\|_\infty  \Bigg[\sup_{x\in I_{j,m}} \left(  |{{\beta}}(x)| + m  |{{\beta}^{'}}(x)|  + m^2  |{{\beta}^{''}}(x)| \right) \sup_{x\in [-m,m]} |\mathcal{G}(x)| \\[1ex]
&+ m \sup_{x\in I_{j,m}} \left( |{{\beta}}(x)|+ m |{{\beta}^{'}}(x)| \right) \sup_{x\in [-m,m]} |\mathcal{G}^{'}(x)| +  m^2\sup_{x\in I_{j,m}} |{\beta}(x)|  \sup_{x\in [-m,m]} |\mathcal{G}^{''}(x)|\Bigg]. 
\end{split} 
\end{equation*}
Hence, 
\begin{equation*}
\begin{split} 
 \|(\mathcal{K}_{j,m})^{''}\|_\infty  \leq  4\|\vartheta^{''}\|_\infty  \sup_{x\in I_{j,m}} \left(   |{{\beta}}(x)| \ + \ m  |{{\beta}^{'}}(x)|  + m^2|{{\beta}^{''}}(x)|  \right) 
 \sup_{x\in [-m,m]} \left(  |\mathcal{G}(x)| +\ m  |\mathcal{G}^{'}(x)| +  m^2   |\mathcal{G}^{''}(x)| \right)  
\end{split} 
\end{equation*}
for all $j\in \mathbb{N}$. 
Therefore,  it follows from (\ref{Dm++}) that 
\begin{multline*} 
\| \mathcal{A}_m\|_p  \leq 4c_1\|\vartheta^{''}\|_\infty \sum_{j\in \mathbb{N}}   \sup_{x\in I_{j,m}} \left(  |{{\beta}}(x)| + m |{{\beta}^{'}}(x)|  + m^2  |{{\beta}^{''}}(x)|  \right) \times\\
 \times \sup_{x\in [-m,m]} \left(  |\mathcal{G}(x)| +m  |\mathcal{G}^{'}(x)| +  m^2   |\mathcal{G}^{''}(x)| \right) \|{g}\|_p.
\end{multline*}
So, by Lemma \ref{lemma2}, we have  
\begin{multline}\label{Am1} 
\| \mathcal{A}_m\|_p  \leq 16 c_1\|\vartheta^{''}\|_\infty \sum_{j\in \mathbb{N}}   \sup_{x\in I_{j,m}} \left(  |{{\beta}}(x)|   + m^2  |{{\beta}^{''}}(x)|  \right)  \sup_{x\in [-m,m]} \left(  |\mathcal{G}(x)| +  m^2   |\mathcal{G}^{''}(x)| \right) \|{g}\|_p.
\end{multline}
Since the function $\alpha, \beta$ is of monotone type,  there exists a constant $c_0$
 such that 
\begin{equation}\label{monot}
	|\alpha(x)| \geq c_0 |\alpha(y)|,  |\alpha^{''}(x)| \geq c_0 |\alpha^{''}(y)|, |\beta(x)| \geq c_0 |\beta(y)|,  |\beta^{''}(x)| \geq c_0 |\beta^{''}(y)| 
\end{equation}
for all $\ 4|y| \geq |x| \geq |y|/4.$
Hence,
$$\sup_{|x|\in I_{j,m}} |{\beta}(x) |
\leq \frac{c_0}{m} \int_{|x|\in I_{j,m}} |{\beta}(x)| dx,$$
$$\sup_{|x|\in I_{j,m}} |m^2 {\beta}^{''}(x) |
\leq c_0 m \int_{|x|\in I_{j,m}}  | {\beta}^{''}(x)| dx.$$
So,
$$ \sum_{j\in \NN}  \sup_{|x|\in I_{j,m}}\left( |{\beta}(x) | +  
|m^2 {\beta}^{''}(x) |  \right) 
\leq 
c_0 \int_{|x| \geq m}  \left(\frac{{|\beta}(x)|}{m} +   \big|m{\beta}^{''}(x)\big| \right) dx \leq 
c_0 J_m ({\beta}).
$$
Combining this with  (\ref{Am1}), we obtain that 
\begin{equation}\label{aaDm+}
\| \mathcal{A}_m\|_p\leq  16 c_0 c_1 \|\vartheta^{''}\|_\infty \varepsilon_m   \|{g}\|_p.
\end{equation}
Similarly, 
\begin{equation}\label{aaDm-}
\| \mathcal{B}_m\|_p\leq 16 c_0 c_1 \|\vartheta^{''}\|_\infty \varepsilon_m   \|{g}\|_p.
\end{equation}

\noindent Next, we will estimate $\| f- \mathcal{C}_m\|_p$.
Notice that $\gamma(k)=\vartheta(k/m) \mathcal{G} (k) \beta(k)= \vartheta(k/m) \lambda(k)$ for $k \in [-m,m]$,  and then 
 $$
 \sigma_m(\vartheta; f)(x)=\sum_{k\in \mathbb{Z}} \vartheta(k/m)  \widehat{f}(k)e^{ikx}= \sum_{k=-m}^m \vartheta(k/m) \lambda(k)  \widehat{g}(k)e^{ikx}=\sum_{k=-m}^m \gamma(k)  \widehat{g}(k)e^{ikx}=\mathcal{C}_m (x),
  $$
and therefore,
\begin{equation}\label{f-Cm}
\|f- \mathcal{C}_m \|_p= \|f- \sigma_m(\vartheta; f)\|_p.
\end{equation}
We define the functions  $S(x), \Phi_{j,m}(x)$ and $\Psi_{j,m} (x)$ {as}
$$
S(x):=\vartheta(x)- \vartheta(x/2), \quad 
\Phi_{j,m}(x):={\lambda} (2^jmx), \quad 
\Psi_{j,m} (x):= S(x) \Phi_{j,m} (x).
$$
Clearly, we have that
$$
(\vartheta(k/(2^{j+1}m)) -\vartheta(k/(2^{j}m)))\lambda(k)=S(k/ (2^j m))\Phi_{j,m} (k/(2^jm))=\Psi_{j,m}(k/(2^jm)), 
$$
which together with 
\begin{equation*}
\begin{split} 
&\sigma_{2^{j+1} m}(\vartheta; f)-\sigma_{2^j m}(\vartheta; f)= \sum_{k \in \mathbb{Z}} (\vartheta(k/(2^{j+1}m)) -\vartheta(k/(2^{j}m))\widehat{f}(k)e^{ikx}\\
&=
 \sum_{k \in \mathbb{Z}} (\vartheta(k/(2^{j+1}m)) -\vartheta(k/(2^{j}m)))\lambda(k) \widehat{g}(k)e^{ikx}
\end{split} 
\end{equation*}
implies that
$$\sigma_{2^{j+1} m}(\vartheta; f)-\sigma_{2^j m}(\vartheta; f)= \sum_{k \in \mathbb{Z}} \Psi_{j,m}(k/(2^jm)) \widehat{g}(k)e^{ikx}= \sigma_{2^jm}(\Psi_{j,m}; g).$$
Then by  Lemma \ref{lemma1}, we obtain  
\begin{equation}\label{sigma4}
\|\sigma_{2^{j+1} m}(\vartheta; f)-\sigma_{2^j m}(\vartheta; f)\|_p \leq c_1 \|\Psi_{j,m}^{''}\|_\infty  \|{g}\|_p.
\end{equation}
Moreover, from the definition of $\Psi_{j,m}$, $\text{supp}S\subset [-2,-1/2] \cup [1/2,2]$,  and $\|S\|_\infty \leq 2 \|S^{'}\|_\infty \leq 4\|S^{''}\|_\infty\leq 8\|\vartheta^{''}\|_\infty,$
we have that
\begin{equation*}
\begin{split} 
|\Psi_{j,m}^{''}(x)| &=|S^{''}(x) \Phi_{j,m}(x) + 2S^{'}(x) \Phi_{j,m}^{'}(x) + S(x) \Phi_{j,m}^{''}(x) |
\\[1ex]
& \leq 8\|\vartheta^{''}\|_\infty   \sup_{|x|\in [1/2,2]}\left(  |\Phi_{j,m}(x) |+ \Phi_{j,m}^{'}(x)| +  | \Phi_{j,m}^{''}(x)| \right) \\[1ex]
&\leq 16\|\vartheta^{''}\|_\infty   \sup_{|x|\in [1/2,2]} \left( |\Phi_{j,m}(x) |  +  | \Phi_{j,m}^{''}(x)|\right) \\[1ex]
&=16\|\vartheta^{''}\|_\infty \sup_{|x|\in [2^{j-1} m,2^{j+1}m]} \left(  |{\lambda}(x) |
 +(2^j m)^2  |{\lambda}^{''}(x) | \right)\\[1ex]
&\leq  64\|\vartheta^{''}\|_\infty \sup_{|x|\in [2^{j-1} m,2^{j+1}m]}\left( |{\lambda}(x) |
 +  |x^2{\lambda}^{''}(x) | \right). 
\end{split} 
\end{equation*}
Combining this and (\ref{sigma4}), we deduce 
$$\|\sigma_{2^{j+1} m}(\vartheta; f)-\sigma_{2^j m}(\vartheta; f)\|_p \leq 64 c_1 \|\vartheta^{''}\|_\infty \sup_{|x|\in [2^{j-1} m,2^{j+1}m]}\left( |{\lambda}(x) |
 +  |x^2{\lambda}^{''}(x) | \right)  \|{g}\|_p.$$
Therefore, by (\ref{f-Cm}) and $\lim_{m\to \infty}\| f- \sigma_{2^j m}(\vartheta; f)\|_p=0$, we have that 
\begin{align}\label{f-Cm111}
\notag \| f- \mathcal{C}_m\|_p  &\leq 
\sum_{j=0}^\infty \|\sigma_{2^{j+1} m}(\vartheta; f)-\sigma_{2^j m}(\vartheta; f)\|_p  \\
&\leq 64 c_1 \|\vartheta^{''}\|_\infty\sum_{j=0}^\infty  \sup_{|x|\in [2^{j-1} m,2^{j+1}m]}\left( |{\lambda}(x) |
 +  |x^2{\lambda}^{''}(x) | \right) \|{g}\|_p.
\end{align}
Since (\ref{monot}), 
$$\sup_{|x|\in [2^{j-1} m,2^{j+1}m]} |{\lambda}(x) |
\leq \frac{c_0}{2^jm} \int_{|x|\in [2^j m,2^{j+1}m]} |{\lambda}(x)| dx \leq \frac{c_0}{m} \int_{|x|\in [2^j m,2^{j+1}m]} |{\lambda}(x)| dx,$$
and
$$\sup_{|x|\in [2^{j-1} m,2^{j+1}m]} |x^2 {\lambda}^{''}(x) |
\leq 2 c_0 \int_{|x|\in [2^j m,2^{j+1}m]}  | x{\lambda}^{''}(x)| dx.$$
So,
$$ \sum_{j=0}^\infty  \sup_{|x|\in [2^{j-1} m,2^{j+1}m]}\left( |{\lambda}(x) |+  |x^2 {\lambda}^{''}(x) | 
 \right) \leq 2 c_0 \int_{|x| \geq m}  \left(\frac{{|\lambda}(x)|}{m} +   |x{\lambda}^{''}(x)| \right) dx= 2c_0 J_m ({\lambda}).
$$
Hence, by (\ref{f-Cm111}), we deduce 
 \begin{equation}\label{f-Cm122}
\| f- \mathcal{C}_m\|_p  \leq  128 c_0 c_1 \|\vartheta^{''}\|_\infty \varepsilon_m \|{g}\|_p.
\end{equation}
Combining (\ref{aaDm+}), (\ref{aaDm-}) and (\ref{f-Cm122}) we have 
\begin{equation*} 
\|f-Q_{m,\beta}(f)\|_p \ \leq c\varepsilon_m \|f\|_{\mathcal{H}_{\lambda, p} (\TT)}.
\end{equation*} 
\hfill
\end{proof}

From the above theorem, by letting $\lambda=\beta$, we obtain the following corollary.
\begin{coro} \label{hq1}
Let $1 \leq p \leq  \infty$ and ${\lambda}$  be of monotone type. Then there exists a positive constant $c$ such that for all  $f\in \mathcal{H}_{\lambda, p} (\TT)$ and $m\in \mathbb{N}$,
\begin{equation*}
\|f-Q_{m,\lambda}(f)\|_p  \leq c J_m(\lambda) \|f\|_{\mathcal{H}_{\lambda, p} (\TT)}.
\end{equation*} 
\end{coro}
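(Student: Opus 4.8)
The plan is to obtain the statement as a direct specialization of Theorem \ref{p=1} to the case $\beta = \lambda$, so that the entire argument reduces to evaluating the quantity $\varepsilon_m$ under this choice. First I would check that the hypotheses of Theorem \ref{p=1} are satisfied: since $\lambda$ is of monotone type, the choice $\beta = \lambda$ makes both $\lambda$ and $\beta$ of monotone type, and since $\lambda$ takes values in $\RR \setminus \{0\}$ the auxiliary function $\mathcal{G} = \lambda/\beta$ appearing in (\ref{Hm}) is well defined.

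Next I would compute $\mathcal{G}$ explicitly. With $\beta = \lambda$ we have $\mathcal{G}(x) = \lambda(x)/\lambda(x) = 1$ for all $x$, so $\mathcal{G}$ is identically constant. Consequently $|\mathcal{G}(x)| = 1$ and $\mathcal{G}''(x) = 0$ everywhere, whence
\[
\sup_{|x| \in [-m,m]}\Big( |\mathcal{G}(x)| + m^2 \sup_{|x| \in [-m,m]} |\mathcal{G}''(x)| \Big) = 1.
\]
Substituting this, together with $J_m(\beta) = J_m(\lambda)$, into the definition of $\varepsilon_m$ gives
\[
\varepsilon_m = J_m(\lambda) + 1 \cdot J_m(\lambda) = 2\, J_m(\lambda).
\]

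Finally I would invoke the conclusion of Theorem \ref{p=1}, which yields $\|f - Q_{m,\lambda}(f)\|_p \leq c\, \varepsilon_m \|f\|_{\mathcal{H}_{\lambda,p}(\TT)} = 2c\, J_m(\lambda)\,\|f\|_{\mathcal{H}_{\lambda,p}(\TT)}$, and then absorb the factor $2$ into the constant to produce the claimed inequality. There is no genuine obstacle here: the corollary is a bookkeeping consequence of the theorem, and the only point worth verifying is that the $\mathcal{G}$-dependent factor collapses to $1$ rather than contributing an extra $m$-dependent term. This collapse occurs precisely because $\beta = \lambda$ forces $\mathcal{G}$ to be constant, so that its second derivative vanishes and its supremum equals one.
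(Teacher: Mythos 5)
Your proposal is correct and matches the paper exactly: the paper derives Corollary \ref{hq1} from Theorem \ref{p=1} precisely by setting $\beta=\lambda$, and your computation that $\mathcal{G}\equiv 1$ forces the $\mathcal{G}$-dependent factor in $\varepsilon_m$ to equal $1$, giving $\varepsilon_m = 2J_m(\lambda)$, supplies the (routine) details the paper leaves implicit. Nothing further is needed.
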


\begin{defn}
Let $r, \kappa \in \RR$. A function $f: \RR \to \RR$ will be called a mask of  type $(r,\kappa)$ if $f$ is an even,  $2$ times continuously differentiable such that for $t\geq 1$, $f(t)= |t|^{-r} (\log (|t|+1))^{-\kappa} F (\log |t|)$ for some $F : \RR \to \RR$ such that $|F^{(k)} (t)| \leq a_1$ for all  $t\geq 1, k=0,1,2.$
\end{defn}

\begin{theorem} \label{weakype}
Let $1 \leq p \leq  \infty$, $1<r<\infty, \kappa\in \RR $ and the function ${\lambda}$  be a mask of  type $(r,\kappa)$. Then there exists a positive constant $c$ such that for all  $f\in \mathcal{H}_{\lambda, p} (\TT)$ and $m\in \mathbb{N}$,
\begin{equation*}
\|f-Q_{m,\lambda}(f)\|_p  \leq c m^{-r} (\log m)^{-\kappa} \   \|f\|_{\mathcal{H}_{\lambda, p} (\TT)}.
\end{equation*} 
\end{theorem}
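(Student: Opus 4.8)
The plan is to derive Theorem \ref{weakype} directly from Corollary \ref{hq1}. Once it is known that a mask $\lambda$ of type $(r,\kappa)$ is of monotone type, Corollary \ref{hq1} gives
\[ \|f - Q_{m,\lambda}(f)\|_p \le c\, J_m(\lambda)\, \|f\|_{\mathcal{H}_{\lambda,p}(\TT)}, \]
so the whole theorem reduces to the single estimate $J_m(\lambda) \le c\, m^{-r}(\log m)^{-\kappa}$ for all $m \ge 2$ (the finitely many small $m$ being absorbed into the constant). I would organize the argument into three steps: (a) verify that a mask is of monotone type so that Corollary \ref{hq1} applies; (b) bound the two pieces of $J_m(\lambda)$; (c) combine.

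The computational core is step (b). On the integration range $|x| \ge m \ge 2$ the explicit formula $\lambda(x) = |x|^{-r}(\log(|x|+1))^{-\kappa}F(\log|x|)$ is in force, so I would first record that differentiating it costs one power of $x$ per derivative: by the product and chain rules, together with $|F^{(k)}| \le a_1$ for $k=0,1,2$, one gets $|\lambda(x)| \le a_1 |x|^{-r}(\log(|x|+1))^{-\kappa}$ and $|\lambda''(x)| \le c\,|x|^{-r-2}(\log(|x|+1))^{-\kappa}$, whence $|x\lambda''(x)| \le c\,|x|^{-r-1}(\log|x|)^{-\kappa}$. Since $(\log(x+1))^{-\kappa} \asymp (\log x)^{-\kappa}$ for $x \ge 2$, the definition of $J_m$ reduces everything to the two scalar integrals $\int_m^\infty x^{-r}(\log x)^{-\kappa}\,dx$ and $\int_m^\infty x^{-r-1}(\log x)^{-\kappa}\,dx$. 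Here the hypothesis $r>1$ is essential: the substitution $x=ms$ turns the first into $m^{1-r}\int_1^\infty s^{-r}(\log m + \log s)^{-\kappa}\,ds$, and for $\kappa>0$ one bounds $(\log m + \log s)^{-\kappa} \le (\log m)^{-\kappa}$, while for $\kappa \le 0$ one uses $(\log m + \log s)^{|\kappa|} \le 2^{|\kappa|}((\log m)^{|\kappa|} + (\log s)^{|\kappa|})$; in either case the $s$-integral converges and the result is $\le c\,m^{1-r}(\log m)^{-\kappa}$. The second integral is handled identically and is $\le c\,m^{-r}(\log m)^{-\kappa}$. Dividing the first by $m$ (as it enters $J_m$) and adding yields $J_m(\lambda) \le c\,m^{-r}(\log m)^{-\kappa}$, which with step (a) and Corollary \ref{hq1} finishes the proof.

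I expect the real obstacle to be step (a) rather than (b). The factors $|x|^{-r}$ and $(\log(|x|+1))^{-\kappa}$ are manifestly comparable under the doubling $|y|/2 \le |x| \le 2|y|$, and the same holds for $|x|^{-r-2}(\log(|x|+1))^{-\kappa}$, so the only point at issue is the slowly varying factor $F(\log|x|)$: since its argument moves by at most $\log 2$ under doubling and $|F'| \le a_1$, one has $|F(\log x) - F(\log y)| \le a_1 \log 2$, which gives the required two-sided comparison of $|\lambda|$ and $|\lambda''|$ provided the relevant combination stays bounded away from $0$ on the scale $\log 2$. To avoid any hypothesis on the lower size of $F$, the cleanest alternative is to bypass Corollary \ref{hq1} and instead re-run the proof of Theorem \ref{p=1} with $\beta=\lambda$, replacing its only use of monotone type — the passage from $\sum_j \sup_{I_{j,m}}(|\lambda(x)| + m^2|\lambda''(x)|)$ and $\sum_j \sup_{[2^{j-1}m,\,2^{j+1}m]}(|\lambda(x)| + |x^2\lambda''(x)|)$ to integrals — by the direct bound $\sup_{[2^{j-1}m,\,2^{j+1}m]} |x|^{-r}(\log(|x|+1))^{-\kappa} \le c\,(2^jm)^{-r}(\log(2^jm))^{-\kappa}$ and the analogous estimate on the arithmetic blocks $I_{j,m}$, then summing the resulting series in $j$. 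Both series converge precisely because $r>1$ (the block series being dominated by $\sum_j j^{-r}$), and they reproduce the bound $c\,m^{-r}(\log m)^{-\kappa}$ using only the upper bound $|F|\le a_1$.
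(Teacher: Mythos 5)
Your proposal is correct, and its computational core coincides with the paper's own proof: the paper proves Theorem \ref{weakype} exactly by bounding the two pieces of $J_m(\lambda)$ --- estimating $\int_{|x|\ge m}|\lambda(x)|/m\,dx$ and $\int_{|x|\ge m}|x\lambda''(x)|\,dx$ via the explicit mask form, using $r>1$ and $|F^{(k)}|\le a_1$ for $k=0,1,2$, to get $J_m(\lambda)\le a_4\,m^{-r}(\log(m+1))^{-\kappa}$ --- and then invoking Corollary \ref{hq1}; this is precisely your step (b) followed by step (c). Where you genuinely depart from the paper is step (a): the paper applies Corollary \ref{hq1} \emph{without} verifying its hypothesis that $\lambda$ is of monotone type, and you are right to identify this as the weak point rather than the integral estimates. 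A mask of type $(r,\kappa)$ need not be of monotone type: the definition bounds $F$ and its derivatives only from above, so $|F|$ may dip arbitrarily close to $0$ on the scale $\log 2$, destroying the two-sided comparison $|\lambda(x)|\ge c_0|\lambda(y)|$; worse, the condition $|\lambda''(x)|\ge c_0|\lambda''(y)|$ fails outright whenever $\lambda''$ has a zero, which the mask definition permits. Your bypass is a sound repair: with $\beta=\lambda$ one has $\mathcal{G}\equiv 1$ (recall the standing assumption that $\lambda$ is nonvanishing), so the factor $\sup_{x\in[-m,m]}\left(|\mathcal{G}(x)|+m^2|\mathcal{G}''(x)|\right)$ in the proof of Theorem \ref{p=1} equals $1$, and the only place monotone type enters --- the passage via \eqref{monot} from $\sum_{j}\sup_{x\in I_{j,m}}\left(|\lambda(x)|+m^2|\lambda''(x)|\right)$ and from the dyadic sums after \eqref{f-Cm111} to the integrals defining $J_m$ --- can be replaced by your direct bounds: on $I_{j,m}$ one has $|x|\asymp jm$, giving a series dominated by $m^{-r}(\log m)^{-\kappa}\sum_{j\ge 1}j^{-r}$, and on the annuli $|x|\in[2^{j-1}m,2^{j+1}m]$ one gets a geometric series, both convergent since $r>1$, with the split $(\log m+\log s)^{|\kappa|}\le 2^{|\kappa|}\left((\log m)^{|\kappa|}+(\log s)^{|\kappa|}\right)$ handling $\kappa<0$. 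So your proof is complete, and in fact more careful than the paper's, which either tacitly treats monotone type as an extra standing hypothesis or leaves this gap unaddressed; the only cost of your bypass is repeating the structure of Theorem \ref{p=1}'s proof instead of quoting Corollary \ref{hq1} as a black box.
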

\begin{proof}
Since the function ${\lambda}$  be a mask of  type $(r,\kappa)$ and $r>1$, 
\begin{equation}\label{ab1}
 \int_{|x| \geq m}  \left| \frac{{\lambda}(x)}{m}\right| dx   \leq a_1  \int_{|x| \geq m}  \frac{|x|^{-r} (\log (|x|+1))^{-\kappa}}{m} dx \leq a_2 m^{-r} (\log (m+1))^{-\kappa} \quad \forall m \in \NN.
\end{equation}
On the other hand,
\begin{equation*}
\begin{split}
& \int_{|x| \geq m}  | x{\lambda}^{''}(x)| dx   \leq    \int_{|x| \geq m}  |x| \ \Big( (|x|^{-r} (\log (|x|+1))^{-\kappa})^{''} |F (\log |x|) | \\[1ex] 
&+  2(|x|^{-r} (\log (|x|+1))^{-\kappa})^{'} |F^{'} (\log |x|) | / |x| 
+ (|x|^{-r} (\log (|x|+1))^{-\kappa})  |F^{''} (\log |x|) -F^{'} (\log |x|) |/ x^2   \Big) dx\\[1ex]
&\leq a_1\int_{|x| \geq m}  |x| \left( (|x|^{-r} (\log (|x|+1))^{-\kappa})^{''} 
+  2(|x|^{-r} (\log (|x|+1))^{-\kappa})^{'}/|x|   
+ 2(|x|^{-r} (\log (|x|+1))^{-\kappa})/ x^2    \right) dx \\[1ex]
&\leq a_3 m^{-r} (\log (m+1))^{-\kappa}.
\end{split}
\end{equation*}
Hence, by (\ref{ab1}), we deduce 
$$J_m({\lambda} )\leq a_4 m^{-r}(\log (m+1))^{-\kappa}. $$
From this and Corollary \ref{hq1},  we complete the proof.
\end{proof}

 \begin{coro} 
For $1 \leq p \leq  \infty$, $1<r<\infty$ and $\lambda (x)=\beta (x)= x^{-r}$ for $x \ne 0$,  $\mathcal{H}_{\lambda, p} (\TT)$ becomes the Korobov space $K^r_p(\bT)$.
Then  we have the estimate as in \cite{1}:
$$
M_n(U_{\lambda, p}(\bT), \kappa_{r})_p \leq c m^{-r}$$
where $\kappa_{r}$ is the Korobov function.
\end{coro}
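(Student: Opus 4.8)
The plan is to read the corollary off Theorem \ref{weakype} once the data $\lambda=\beta$ is matched to the mask framework. First I would observe that the choice $\lambda(x)=\beta(x)=|x|^{-r}$ for $|x|\ge 1$, extended to an even $2$-times continuously differentiable function near the origin, is exactly a mask of type $(r,0)$: taking $F\equiv 1$ and $\kappa=0$ in the definition, one has $f(t)=|t|^{-r}(\log(|t|+1))^{0}\,F(\log|t|)=|t|^{-r}$ for $t\ge 1$, and the bound $|F^{(k)}(t)|\le a_1$ for $k=0,1,2$ holds trivially. The singularity at $x=0$ is irrelevant to the mask definition, which only constrains the tail $|x|\ge 1$; the smooth even extension through the origin is what makes $\lambda$ globally $C^2$.

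Next I would identify the objects. Since $\widehat{\varphi_\lambda}(k)=\lambda(k)=|k|^{-r}$ for $k\neq 0$, the function $\varphi_\lambda$ defined by (\ref{varphi}) is, up to the normalization of the zeroth Fourier coefficient, the Korobov function $\kappa_r$; consequently $\mathcal{H}_{\lambda,p}(\bT)=\{\kappa_r * g:\ g\in L^p(\bT)\}$ is precisely the Korobov space $K^r_p(\bT)$ with matching norms. This identification is essentially definitional and is the one used in \cite{1}.

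With these identifications in hand I would apply Theorem \ref{weakype} with $\kappa=0$: for every $f\in\mathcal{H}_{\lambda,p}(\bT)$ and every $m\in\mathbb{N}$,
\[
\|f-Q_{m,\lambda}(f)\|_p\ \le\ c\,m^{-r}\,\|f\|_{\mathcal{H}_{\lambda,p}(\bT)}.
\]
The decisive structural observation is that, by its definition (\ref{Qm}), $Q_{m,\lambda}(f)$ is a linear combination of the $n:=2m+1$ translates $\varphi_\lambda(\cdot-\tfrac{k}{2m+1})=\kappa_r(\cdot-\tfrac{k}{2m+1})$, $k=0,\dots,2m$. Hence $Q_{m,\lambda}(f)$ is an admissible competitor in the infimum defining $M_n(U_{\lambda,p}(\bT),\kappa_r)_p$ with $n=2m+1$. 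Using $\|f\|_{\mathcal{H}_{\lambda,p}(\bT)}\le 1$ for $f\in U_{\lambda,p}(\bT)$ and passing to the supremum over such $f$ yields
\[
M_n(U_{\lambda,p}(\bT),\kappa_r)_p\ \le\ \sup_{f\in U_{\lambda,p}(\bT)}\|f-Q_{m,\lambda}(f)\|_p\ \le\ c\,m^{-r},
\]
which is the claimed estimate, with $n=2m+1$ so that $m^{-r}\asymp n^{-r}$.

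The only genuinely technical point is the verification that $|x|^{-r}$ admits an even $C^2$ extension across the origin that still qualifies as a mask of type $(r,0)$; this is a routine matter of a smooth cutoff, and since the rate in Theorem \ref{weakype} is driven entirely by the tail integral $J_m(\lambda)$, the near-origin behavior contributes nothing. Everything else is bookkeeping: matching the conventions for the Korobov function and space, and reading the number of translates $n=2m+1$ off the construction (\ref{Qm}).
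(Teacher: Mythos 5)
Your proof is correct and follows essentially the route the paper intends: the corollary is stated there without a separate proof precisely because it is the specialization $\kappa=0$, $F\equiv 1$ of Theorem \ref{weakype}, combined with the observation that $Q_{m,\lambda}(f)$ in \eqref{Qm} is by construction a linear combination of $n=2m+1$ translates of $\varphi_\lambda=\kappa_r$, so it is an admissible competitor in the infimum defining $M_n(U_{\lambda,p}(\bT),\kappa_r)_p$. Your attention to the even $C^2$ modification of $|x|^{-r}$ near the origin (where the paper's standing assumption $\lambda:\RR\to\RR\setminus\{0\}$ bounded would otherwise fail) and to the zeroth Fourier coefficient of $\kappa_r$ only makes explicit what the paper leaves implicit.
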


\begin{defn}
A function $f: \RR \to \RR$ is called a function of exponent  type if $f$ is  $2$ times continuously differentiable and there exists a positive  constant  $s$ such that $f(t)= e^{-s |t| } F (|t|)$ for some decreasing function $F: [0, +\infty ) \to (0, +\infty ).$ 
\end{defn}

\begin{theorem} 
Let $1 \leq p \leq  \infty,$ $1<r<\infty,$ $\kappa\in \ZZ$, the function   ${\lambda}$  be a mash of type $(r,\kappa)$, the function   ${\beta}$ of exponent  type. Then there exists a positive constant $c$ such that for all  $f\in \mathcal{H}_{\lambda, p} (\TT)$ and $m\in \mathbb{N}$, we have 
\begin{equation} \nonumber
\|f-Q_{m,\beta}(f)\|_p  \leq c m^{-r} (\log (m+1))^{-\kappa} \|f\|_{\mathcal{H}_{\lambda, p} (\TT)}.
\end{equation} 
\end{theorem}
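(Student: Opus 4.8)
The plan is to run the argument of the proof of Theorem~\ref{p=1} for the operator $Q_{m,\beta}$, replacing the one place where the monotone type of $\beta$ is invoked (the passage from the discrete sum $\sum_{j}\sup_{I_{j,m}}$ to the integral $J_m(\beta)$) by a direct estimate that exploits the exponential decay provided by the exponent-type hypothesis. Concretely, I would first reproduce the decomposition $\|f-Q_{m,\beta}(f)\|_p \le \|\mathcal{A}_m\|_p + \|\mathcal{B}_m\|_p + \|f-\mathcal{C}_m\|_p$ and note that the estimate (\ref{Am1}),
$$\|\mathcal{A}_m\|_p \le 16 c_1 \|\vartheta''\|_\infty \Big(\sum_{j\in\NN} \sup_{x\in I_{j,m}}(|\beta(x)| + m^2|\beta''(x)|)\Big)\sup_{|x|\le m}\big(|\mathcal{G}(x)| + m^2|\mathcal{G}''(x)|\big)\,\|g\|_p,$$
together with the identical bound for $\|\mathcal{B}_m\|_p$, is derived using only Lemmas~\ref{lemma1} and~\ref{lemma2}, hence holds verbatim here (the monotone type of $\beta$ enters only afterward). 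The term $\|f-\mathcal{C}_m\|_p=\|f-\sigma_m(\vartheta;f)\|_p$ involves only $\lambda$ and is controlled exactly as in the proof of Theorem~\ref{weakype} by $c\,J_m(\lambda)\le c\,m^{-r}(\log(m+1))^{-\kappa}\|g\|_p$, since $\lambda$ is a mask of type $(r,\kappa)$.

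It then remains to estimate the two factors above for the exponent-type $\beta$. Writing $\beta(x)=e^{-s|x|}F(|x|)$ with $F$ positive and decreasing, $\beta$ is decreasing on $[0,\infty)$, so $\sup_{x\in I_{j,m}}|\beta(x)|=\beta((2m+1)j-m)$, and since $F$ is decreasing,
$$\sum_{j\ge1}\beta((2m+1)j-m) \le F(m+1)\,e^{sm}\sum_{j\ge1} e^{-s(2m+1)j} \le c\,F(m+1)\,e^{-sm}$$
for $m$ large. The companion sum with $m^2|\beta''|$ is treated the same way once one knows $\beta''$ carries the factor $e^{-s|x|}$, giving $\sum_j\sup_{I_{j,m}}(|\beta|+m^2|\beta''|)\le c\,(\text{poly in }m)\,F(m)\,e^{-sm}$. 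On the other side, for $|x|\le m$ the quotient $\mathcal{G}=\lambda/\beta$ grows exponentially and attains its supremum near $|x|=m$, so that
$$\sup_{|x|\le m}|\mathcal{G}(x)| \le c\,\mathcal{G}(m) = c\,\frac{m^{-r}(\log(m+1))^{-\kappa}F_\lambda(\log m)}{e^{-sm}F(m)},$$
with $F_\lambda$ the bounded profile of the mask $\lambda$.

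Multiplying the two factors, the exponentials $e^{sm}$ and $e^{-sm}$ cancel, and $F(m+1)/F(m)\le1$ together with $|F_\lambda|\le a_1$ leaves
$$\sum_j\sup_{I_{j,m}}|\beta|\cdot\sup_{|x|\le m}|\mathcal{G}| \le c\,m^{-r}(\log(m+1))^{-\kappa}.$$
The products involving the second-derivative pieces $m^2|\beta''|$ and $m^2|\mathcal{G}''|$ exhibit the same exponential cancellation and the same polynomial-times-logarithmic leading order, hence are likewise bounded by $c\,m^{-r}(\log(m+1))^{-\kappa}$; summing the four products gives $\|\mathcal{A}_m\|_p,\|\mathcal{B}_m\|_p\le c\,m^{-r}(\log(m+1))^{-\kappa}\|g\|_p$, and combining with the $\mathcal{C}_m$-estimate and $\|f\|_{\mathcal{H}_{\lambda,p}(\TT)}=\|g\|_p$ finishes the proof.

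I expect the main obstacle to be the careful bookkeeping of the second-derivative terms: one must show that $\mathcal{G}''=(\lambda/\beta)''$ grows no faster than $e^{sm}/F(m)$ times the polynomial-logarithmic order of $\lambda$ (which needs the mask bounds on $\lambda,\lambda',\lambda''$ and exponential control of $\beta',\beta''$), and symmetrically that $\beta''$ decays like $e^{-s|x|}$ so that the $\beta''$-sum is $\le c\,(\text{poly})\,e^{-sm}$. Once these two derivative estimates are in hand, the exponential cancellation is identical to that for the function values and the surviving polynomial factors are harmless against $m^{-r}(\log(m+1))^{-\kappa}$. The conceptual point is that replacing the monotone-type reduction by the geometric-series estimate is precisely what allows the exponent-type hypothesis on $\beta$ to substitute for monotone type.
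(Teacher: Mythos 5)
Your decomposition into $\mathcal{A}_m$, $\mathcal{B}_m$, $f-\mathcal{C}_m$ and your treatment of $\|f-\mathcal{C}_m\|_p$ via $J_m(\lambda)$ do coincide with the paper, and your geometric-series estimate of $\sum_j\sup_{I_{j,m}}|\beta|$ against $\sup_{|x|\le m}|\mathcal{G}|$ is sound for the function-value terms. But the plan breaks at exactly the step you flag as ``bookkeeping,'' namely your claim that the second-derivative pieces have ``the same polynomial-times-logarithmic leading order.'' Estimate (\ref{Am1}) is affordable for masks because there each differentiation gains a factor $1/|x|\asymp 1/m$, which compensates the weights $m|\beta'|$, $m^2|\beta''|$; for exponent-type $\beta$ differentiation gains only the constant $s$. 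Already in the model case $\beta(x)=e^{-s|x|}$ one has $|\beta''|=s^2|\beta|$, so $m^2\sup_{x\in I_{j,m}}|\beta''|\asymp s^2m^2\sup_{x\in I_{j,m}}|\beta|$, and likewise $m^2\sup_{|x|\le m}|\mathcal{G}''|\asymp s^2m^2\sup_{|x|\le m}|\mathcal{G}|$; hence after the $e^{\pm sm}$ cancellation the cross products in your bound are of order $m^{2-r}(\log (m+1))^{-\kappa}$ and the $\beta''$--$\mathcal{G}''$ product of order $m^{4-r}(\log (m+1))^{-\kappa}$, missing the claimed rate by a power of $m^4$. The loss is intrinsic to the factored form of (\ref{Am1}): it evaluates the $\beta$-supremum at the inner edge of $I_{j,m}$ and the $\mathcal{G}$-supremum at $|x|=m$ independently, double-counting the exponential; recovering the rate through Lemma \ref{lemma1} would require an unfactored analysis of $\|(\mathcal{K}_{j,m})''\|_\infty$ coupling the growth of $\mathcal{G}(mx)$ to the second-order vanishing of $\vartheta$ at $\pm 1$ --- a genuinely different argument from the one you propose. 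A second, independent gap: ``exponent type'' supplies only that $F$ is positive and decreasing and $\beta\in C^2$, with no bounds on $F'$ or $F''$, so your required statements that $\beta''$ decays like $e^{-s|x|}$ and that $\mathcal{G}''$ grows no faster than $e^{sm}/F(m)$ times the order of $\lambda$ are not derivable from the hypotheses (for instance $F(t)=1+\int_t^\infty e^{-u}(1+\sin e^{(s+1)u})\,du$ is admissible yet makes $\beta''$ fail to decay).

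The paper avoids both problems by discarding Lemma \ref{lemma1} for $\mathcal{A}_m$ and $\mathcal{B}_m$ altogether. It bounds each Fourier coefficient $\gamma(k)=\beta(k)\,\vartheta(k_m/m)\,\lambda(k_m)/\beta(k_m)$ pointwise: the exponential cancellation happens inside the single ratio $\beta(k_m+j(2m+1))/\beta(k_m)$, which together with the monotonicity of $F$ (so that $F(k_m+j(2m+1))/F(k_m)\le 1$) and the boundedness of $\lambda$ yields $|\gamma(k)|\le b_1 e^{-sj(2m+1)}$ in the paper's computation; then $\|\mathcal{D}_{j,m}\|_p$ is estimated trivially by $\sum_{k\in I_{j,m}}|\gamma(k)|\,\|g\|_p$ (using $|\widehat{g}(k_m)|\le\|g\|_p$), and the sum over $j$ is a geometric series dominated by $m^{-r}(\log(m+1))^{-\kappa}$. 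This coefficient-level route uses no derivatives of $\beta$ or of $\mathcal{G}$ at all, which is precisely what makes the weak exponent-type hypothesis sufficient. So while your $\mathcal{C}_m$-estimate matches the paper, the $\mathcal{A}_m$, $\mathcal{B}_m$ part of your proposal must be replaced by this direct coefficient estimate; as written it neither achieves the stated rate nor can its derivative bounds be justified from the definitions.
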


\begin{proof}
We will use the notation in the proof of Theorem \ref{p=1}.	
For $k\in I_{j,m}$ we have $k_m= k - j(2m+1)$ and then 
\begin{equation*}
\begin{split}
| \gamma(k) |&= \Bigg|\beta({k_m +j(2m+1)}) \vartheta({k_m}/m) \frac{\lambda({k_m})}{\beta({k_m})}\Bigg| \\[1.5ex]
&=e^{-sj (2m+1)) } \frac{|\lambda({k_m}) F({k_m +j(2m+1)}) |}{|F({k_m})|} \leq  b_1 e^{-sj (2m+1)) }. 
\end{split}
\end{equation*}
Hence,
$$\Bigg\| \sum_{k\in I_{j,m}} \gamma(k) e^{i kx} \widehat{g}(k_m)\Bigg\|_p \ \leq \ 3 b_1 m e^{-sj (2m+1)) } \|{g} \|_p.$$
This implies that
\begin{equation}\label{aaDm+22}
\begin{split}
\| \mathcal{A}_m\|_p&=\Bigg\| \sum_{j \in \mathbb{N} }\sum_{k\in I_{j,m}} \gamma(k) e^{i kx} \widehat{g}(k_m)\Bigg\|_p 
\\[1.5ex]
&\leq 3 b_1\sum_{j \in \mathbb{N} }  me^{-sj (2m+1)) } \|{g} \|_p  \leq b_2 m^{-r}(\log (m+1))^{-\kappa} \|{g}\|_p.
\end{split}
\end{equation}
Similarly, 
\begin{equation}\label{aaDm-22}
\| \mathcal{B}_m\|_p\leq b_2 m^{-r} (\log (m+1))^{-\kappa} \|{g}\|_p.
\end{equation}
We also known that in the proof of Theorem \ref{p=1} that  
\begin{equation}\label{f-Cm1}
\| f- \mathcal{C}_m\|_p  \leq  b_3\sum_{j=0}^\infty  \sup_{|x|\in [2^{j-1} m,2^{j+1}m]}\left( |{\lambda}(x) |
 +  |x^2{\lambda}^{''}(x) | \right) \|{g}\|_p.
\end{equation}
We see that
$$\sup_{|x|\in [2^{j-1} m,2^{j+1}m]} |{\lambda}(x) |
\leq b_4 \int_{|x|\in [2^j m,2^{j+1}m]} \frac{|{\lambda}(x)|}{|x|} dx $$
$$\sup_{|x|\in [2^{j-1} m,2^{j+1}m]} |x^2 {\lambda}^{''}(x) |
\leq b_4 \int_{|x|\in [2^j m,2^{j+1}m]}  | x{\lambda}^{''}(x)| dx.$$
So,
\begin{equation*}
\begin{split}
 \sum_{j=0}^\infty  \sup_{|x|\in [2^{j-1} m,2^{j+1}m]}\left( |{\lambda}(x) |+  |x^2 {\lambda}^{''}(x) | 
 \right) \leq  b_4 \int_{|x| \geq m}  \left(\frac{{|\lambda}(x)|}{|x|} +   \big|x{\lambda}^{''}(x)\big| \right) dx.
\end{split}
\end{equation*}
Hence, by (\ref{f-Cm1}), we deduce that
\begin{equation*} 
\| f- \mathcal{C}_m\|_p  
  \leq   
   b_3 b_4\|{g}\|_p \int_{|x| \geq m}  \left(\frac{{|\lambda}(x)|}{|x|} +   \big|x{\lambda}^{''}(x)\big| \right) dx
\leq 
b_5 m^{-r} (\log (m+1))^{-\kappa} \|{g}\|_p.
\end{equation*}
Combining this, (\ref{aaDm+22}), (\ref{aaDm-22}) and \eqref{aa|Q_m(f) -f|_p<}, we complete    the proof.
\hfill
\end{proof}

\section{Multivariate approximation} \label{Multivariate approximation}
{In this section, we make use of the univariate operators  $Q_{m,\lambda}$ to construct multivariate operators on sparse Smolyak grids  for approximation of functions from  $\mathcal{H}_{\lambda,p}(\TT^d)$.  Based on this approxiation with certain restriction on the function $\lambda$ we prove an upper bound of $M_n(U_{\lambda, p}(\bT^d), \varphi_{\lambda,d})_p$ for $1\le p \le \infty$ as well as a lower bound of 
$M_n(U_{\lambda, 2}(\bT^d))_2$. The results obtained in this section generalize some results in \cite{1,DM-Err16}.}
	
\subsection{Error estimates for functions in the space $\mathcal{H}_{\lambda,p}(\TT^d)$}

For $\bm \in \NNd$, let the multivariate operator $Q_\bm$ in $\mathcal{H}_{\lambda,p}(\TT^d)$ be defined by
\begin{equation} \label{def[Q_m(d>1)]}
Q_\bm:= \prod_{j=1}^d  Q_{m_j,\lambda},
\end{equation}
where the univariate operator
$Q_{m_j,\lambda}$ is applied to the univariate function $f$ by considering $f$ as a
function of  variable $x_j$ with the other variables held fixed,
$\ZZdp:= \{\bk\in \ZZd: \ k_j \ge 0, \ j \in \mathbb{N}_d\}$ and $k_j$ denotes the $j$th coordinate of $\bk$.

Set $\ZZd_{-1}:= \{\bk\in \ZZd: \ k_j \ge -1, \ j \in \mathbb{N}_d\}$.  For $k \in \ZZ_{-1}$, we define the univariate operator $T_k$ in $\mathcal{H}_{\lambda,p}(\TT)$ by
\begin{equation*}
T_k := {{\rm I}}-  Q_{2^k,\lambda}, \ k \ge 0, \quad  T_{-1}:= {\rm I},
\end{equation*}
where ${{\rm I}}$ is the identity operator. If $\bk\in \ZZd_{-1}$, we define the mixed operator
$T_\bk$ in $\mathcal{H}_{\lambda,p}(\TT^d)$ in the manner of the definition of \eqref{def[Q_m(d>1)]} as
\begin{equation*}
T_\bk:= \prod_{i=1}^d  T_{k_i}.
\end{equation*}
Set $|\bk|:= \sum_{j\in \mathbb{N}_d} |k_j|$ for  $\bk\in \ZZd_{-1}$ and $\bk_{(2)}^{-\kappa}=\prod_{j=1}^d (k_j+2)^{-\kappa}$.

\begin{lemma}  \label{lemma[T_k(f)<]}
Let $1 \leq p \leq \infty$, $1<r<\infty, 0\leq \kappa<\infty $  and the function ${\lambda}
$  be a mask of  type $(r,\kappa)$.
Then we have for any $f \in \mathcal{H}_{\lambda,p}(\TT^d)$ and $\bk\in \ZZd_{-1}$,
\begin{equation} \nonumber
\| T_\bk(f)\|_p
\ \le \
C \bk_{(2)}^{-\kappa} 2^{-r|\bk|}  \|f\|_{\mathcal{H}_{\lambda,p}(\TT^d)}
\end{equation}
with some constant $C$ independent of $f$ and $\bk$.
\end{lemma}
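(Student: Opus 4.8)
The plan is to reduce everything to a clean univariate operator-norm bound and then propagate it through the tensor-product structure of $T_\bk$ by a one-variable-at-a-time (Fubini) argument, so that no genuinely new analytic estimate beyond Theorem \ref{weakype} is needed.

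First I would record the univariate estimate in operator form. For $k\ge 0$, apply Theorem \ref{weakype} (equivalently Corollary \ref{hq1} together with the mask estimate $J_m(\lambda)\le a_4 m^{-r}(\log(m+1))^{-\kappa}$ from its proof) with $m=2^k$ to get $\|h-Q_{2^k,\lambda}(h)\|_p\le c\,2^{-rk}(\log(2^k+1))^{-\kappa}\|h\|_{\mathcal{H}_{\lambda,p}(\TT)}$. Since $0\le\kappa<\infty$ one has $\log(2^k+1)\ge c'(k+2)$ for all $k\ge 0$, hence $(\log(2^k+1))^{-\kappa}\le C\,(k+2)^{-\kappa}$, so that $\|T_k(h)\|_p\le a_k\,\|h\|_{\mathcal{H}_{\lambda,p}(\TT)}$ with $a_k:=C(k+2)^{-\kappa}2^{-r|k|}$. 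For $k=-1$ we have $T_{-1}=\mathrm{I}$ and, writing $h=\varphi_\lambda*u$, Young's inequality gives $\|h\|_p\le\|\varphi_\lambda\|_1\|u\|_p$, where $\varphi_\lambda\in L^1(\TT)$ because $r>1$ makes $(\lambda(j))_j$ summable; since $a_{-1}=C\,2^{-r}$ is a fixed positive number, after enlarging $C$ this is again of the form $\|T_{-1}(h)\|_p\le a_{-1}\|h\|_{\mathcal{H}_{\lambda,p}(\TT)}$. Thus $\|T_k\|_{\mathcal{H}_{\lambda,p}(\TT)\to L^p(\TT)}\le a_k$ for all $k\ge -1$.

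Next I would run an induction on $d$, peeling off the last variable. Since $T_\bk=\prod_{i=1}^d T_{k_i}^{(i)}$ and operators acting on distinct variables commute, the key point is that each $T_{k_i}^{(i)}=\mathrm{I}-Q_{2^{k_i},\lambda}^{(i)}$ commutes with convolution in the remaining variables. Using $\varphi_{\lambda,d}=\varphi_{\lambda,d-1}\otimes\varphi_\lambda$, write $f=\varphi_{\lambda,d}*g=\varphi_{\lambda,d-1}^{(1,\dots,d-1)}*g_2$ with $g_2:=\varphi_\lambda^{(d)}*g$; commuting $T_{k_d}^{(d)}$ past the convolution in the first $d-1$ variables yields $T_{k_d}^{(d)}f=\varphi_{\lambda,d-1}^{(1,\dots,d-1)}*\tilde g$, where $\tilde g:=T_{k_d}^{(d)}g_2$. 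Hence, fiberwise in $x_d$, the function $T_{k_d}^{(d)}f$ lies in $\mathcal{H}_{\lambda,p}(\TT^{d-1})$ with representer $\tilde g$, which is exactly the form needed to apply the induction hypothesis in the first $d-1$ variables. The estimates are then chained by Fubini: for $p<\infty$ and a.e.\ fixed $x_d$ the induction hypothesis in dimension $d-1$ gives $\|T_{k_1}^{(1)}\cdots T_{k_{d-1}}^{(d-1)}\big((T_{k_d}^{(d)}f)(\cdot,x_d)\big)\|_{L^p(\TT^{d-1})}\le\big(\prod_{i<d}a_{k_i}\big)\|\tilde g(\cdot,x_d)\|_{L^p(\TT^{d-1})}$, and integrating the $p$-th power in $x_d$ gives $\|T_\bk(f)\|_p\le\big(\prod_{i<d}a_{k_i}\big)\|\tilde g\|_p$. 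For the remaining factor, for a.e.\ fixed $x'\in\TT^{d-1}$ the slice $g_2(x',\cdot)=\varphi_\lambda*g(x',\cdot)$ is a univariate element of $\mathcal{H}_{\lambda,p}(\TT)$ with representer $g(x',\cdot)$, so the univariate bound gives $\|\tilde g(x',\cdot)\|_{L^p(\TT)}\le a_{k_d}\|g(x',\cdot)\|_{L^p(\TT)}$; integrating in $x'$ gives $\|\tilde g\|_p\le a_{k_d}\|g\|_p$. Multiplying, $\|T_\bk(f)\|_p\le\prod_{i=1}^d a_{k_i}\,\|g\|_p=C^d\,\bk_{(2)}^{-\kappa}2^{-r|\bk|}\|f\|_{\mathcal{H}_{\lambda,p}(\TT^d)}$, and $C^d$ is a constant because $d$ is fixed. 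The case $p=\infty$ is identical with essential suprema in place of the integrals.

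The analytic content is already in Theorem \ref{weakype}, so I expect the main obstacle to be organizational rather than deep: carefully justifying the fiberwise reductions — that the one-variable operators commute with convolution in the other variables, that the relevant slices lie in $\mathcal{H}_{\lambda,p}$ of the lower-dimensional torus for a.e.\ value of the frozen variables (which follows from $\tilde g\in L^p(\TT^d)$ via Fubini), and that the univariate operator-norm bound $a_{k_i}$ is genuinely uniform in the frozen variables so that Fubini reassembles the clean product bound. The only points needing a little care beyond bookkeeping are verifying that Fubini/Tonelli applies to the operators $Q_{2^{k_i},\lambda}^{(i)}$ — which are finite sums of translates of smooth one-variable trigonometric polynomials, so measurability and the slice identities are unproblematic — and treating $p=\infty$ with the obvious modifications.
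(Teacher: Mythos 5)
Your proposal is correct and takes essentially the same route as the paper: an induction on the dimension $d$ that applies the univariate estimate of Theorem \ref{weakype} (with $m=2^{k}$, so that $(\log(2^{k}+1))^{-\kappa}\ll(k+2)^{-\kappa}$) in one variable and the induction hypothesis in the remaining ones, the paper merely compressing your explicit fiberwise Fubini steps into mixed-norm identities such as $\|\,\|T_{k_d}(f)\|_{\mathcal{H}_{\lambda,p}(\TT^{d-1}),\bx'}\|_{p,x_d}=\|\,\|T_{k_d}(f)\|_{p,x_d}\|_{\mathcal{H}_{\lambda,p}(\TT^{d-1}),\bx'}$. Your explicit handling of $k=-1$ via Young's inequality (using $\sum_{j}|\lambda(j)|<\infty$ for $r>1$, $\kappa\ge 0$) fills in a detail the paper leaves implicit.
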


\begin{proof}
We prove the lemma by induction on $d$. For $d=1$ it follows from Theorems \ref{weakype}. Assume the lemma is true for $d-1$. Set ${\bx}':= \{x_j: \, j \in \NN_{d-1}\}$ and ${\bx} = ({\bx}',x_d)$ for ${\bx} \in \RRd$.
We temporarily denote by $\|f\|_{p,{\bx}'}$ and $\|f\|_{\mathcal{H}_{\lambda,p}(\TT^{d-1}), {\bx}'}$ or $\|f\|_{p,x_d}$ and $\|f\|_{\mathcal{H}_{\lambda,p}(\TT),x_d}$ the norms  applied to the function $f$ by considering $f$ as a function of  variable
${\bx}'$ or $x_d$ with the other variable held fixed, respectively.
For $\bk=(\bk',k_d) \in \ZZd_{-1}$, we get by Theorems \ref{weakype}  and the induction assumption
\begin{equation} \nonumber
\begin{aligned}
\|T_\bk(f)\|_p
\ &  = \
\|\|T_{\bk'}T_{k_d}(f)\|_{p,{\bx}'}\|_{p,x_d}
\   \ll \
\| 2^{-r|\bk'|} {\bk'}_{(2)}^{-\kappa} \|T_{k_d}(f)\|_{\mathcal{H}_{\lambda,p}(\TT^{d-1}), {\bx}'}\|_{p,x_d} \\
\ &  = \
2^{-r|\bk'|} {\bk'}_{(2)}^{-\kappa} \| \|T_{k_d}(f)\|_{p,x_d}\|_{\mathcal{H}_{\lambda,p}(\TT^{d-1}), {\bx}'}\\
\ &   \ll \
2^{-r|\bk'|} {\bk'}_{(2)}^{-\kappa} \| 2^{-rk_d} (k_d+2)^{-\kappa}\|f\|_{\mathcal{H}_{\lambda,p}(\TT),x_d}\|_{\mathcal{H}_{\lambda,p}(\TT^{d-1}), {\bx}'} \\
\ &  = \
2^{-r|\bk|} \prod_{j=1}^d (k_j+2)^{-\kappa}\| f\|_{\mathcal{H}_{\lambda,p}(\TT^d)}.
 \end{aligned}
\end{equation}
\hfill
\end{proof}

Let the univariate operator $q_k$ be defined for $k \in \ZZ_+$, by
\begin{equation*}
q_k := \ Q_{2^k,\lambda} -  Q_{2^{k-1},\lambda}, \ k >0, \ \ q_0:= \ Q_{1,\lambda},
\end{equation*}
and in the manner of the definition of \eqref{def[Q_m(d>1)]}, the multivariate operator
$q_\bk$ for $\bk\in \ZZdp$, by

\begin{equation} \nonumber
q_\bk:= \prod_{j=1}^d  q_{k_j}.
\end{equation}
For $\bk\in \ZZdp$, we write $\bk\to \infty$ if $k_j \to \infty$ for each $j \in \mathbb{N}_d$.

\begin{theorem}  \label{theorem[decomposition]}
Let $1 \leq p \leq  \infty$, $1<r<\infty, 0\leq \kappa<\infty $  and the function ${\lambda}
$  be a mask of  type $(r,\kappa)$.
Then every $f \in \mathcal{H}_{\lambda,p}(\TT^d)$ can be represented as the series
\begin{equation} \label{eq[decomposition]}
f
\ = \
\sum_{\bk\in \ZZdp} q_\bk(f)
\end{equation}
converging in $L^p$-norm,
and we have for $\bk\in \ZZdp$,
\begin{equation} \label{ineq[q_k]}
\| q_\bk(f)\|_p
\ \le \
C  2^{-r|\bk|} \bk_{(2)}^{-\kappa} \|f\|_{\mathcal{H}_{\lambda,p}(\TT^d)}
\end{equation}
with some constant $C$ independent of $f$ and $\bk$.
\end{theorem}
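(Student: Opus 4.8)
The plan is to reduce the whole statement to the mixed-difference operators $T_\bk$, which are already controlled by Lemma \ref{lemma[T_k(f)<]}. The starting observation is the univariate telescoping identity
\[
q_k \ = \ T_{k-1} - T_k, \qquad k \ge 0,
\]
which is immediate from the definitions $q_k = Q_{2^k,\lambda} - Q_{2^{k-1},\lambda}$ ($k\ge 1$), $q_0 = Q_{1,\lambda}$, $T_k = {\rm I} - Q_{2^k,\lambda}$ and $T_{-1} = {\rm I}$. Tensorizing over the $d$ coordinates and expanding the product gives
\[
q_\bk \ = \ \prod_{j=1}^d \bigl(T_{k_j-1} - T_{k_j}\bigr) \ = \ \sum_{\bee \in \{0,1\}^d} (-1)^{d-|\bee|}\, T_{\bk-\bee},
\]
where $|\bee| = \sum_j e_j$ and $T_{\bk-\bee}$ is the mixed operator with indices $k_j - e_j \in \{k_j-1,k_j\}$. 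Since $k_j \ge 0$, each shifted index lies in $\ZZ_{-1}$, so Lemma \ref{lemma[T_k(f)<]} applies to each of the $2^d$ summands applied to $f$; note that this never requires $Q_{m,\lambda}$ to preserve $\mathcal{H}_{\lambda,p}(\TT^d)$, as the lemma bounds $\|T_{\bk-\bee}(f)\|_p$ directly for $f\in\mathcal{H}_{\lambda,p}(\TT^d)$.

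For the estimate \eqref{ineq[q_k]}, I would apply Lemma \ref{lemma[T_k(f)<]} to each $T_{\bk-\bee}(f)$ and then absorb the index shift into dimensional constants. Two elementary comparisons suffice: since $\kappa \ge 0$ and $k_j - e_j + 2 \in \{k_j+1,k_j+2\}$, one has $(k_j-e_j+2)^{-\kappa} \le 2^\kappa (k_j+2)^{-\kappa}$, hence $(\bk-\bee)_{(2)}^{-\kappa} \le 2^{\kappa d}\bk_{(2)}^{-\kappa}$; and treating the cases $k_j \ge 1$ and $k_j = 0$ separately gives $2^{-r|k_j-e_j|} \le 2^{r}\,2^{-rk_j}$, hence $2^{-r|\bk-\bee|} \le 2^{rd}2^{-r|\bk|}$. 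Summing the resulting bound over the $2^d$ values of $\bee$ produces $\|q_\bk(f)\|_p \le C\,\bk_{(2)}^{-\kappa}2^{-r|\bk|}\|f\|_{\mathcal{H}_{\lambda,p}(\TT^d)}$, which is \eqref{ineq[q_k]}.

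For the series \eqref{eq[decomposition]}, I would first note that \eqref{ineq[q_k]} already forces absolute convergence: bounding $\bk_{(2)}^{-\kappa} \le 1$ and using $r>0$,
\[
\sum_{\bk\in\ZZdp}\|q_\bk(f)\|_p \ \le \ C\,\|f\|_{\mathcal{H}_{\lambda,p}(\TT^d)}\prod_{j=1}^d\sum_{k=0}^\infty 2^{-rk} \ = \ C(1-2^{-r})^{-d}\,\|f\|_{\mathcal{H}_{\lambda,p}(\TT^d)} \ < \ \infty,
\]
so $\sum_\bk q_\bk(f)$ converges unconditionally in $L^p(\TT^d)$ to some $g$, and the limit may be evaluated along any exhaustion of $\ZZdp$. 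Choosing the cube exhaustion and using the telescoping $\sum_{k=0}^N q_k = Q_{2^N,\lambda}$ in each variable, the rectangular partial sum factors as $\sum_{0\le k_j\le N}q_\bk = \prod_{j=1}^d Q_{2^N,\lambda}$, so $g = \lim_{N\to\infty}\prod_{j=1}^d Q_{2^N,\lambda}(f)$, and it remains to identify this limit with $f$.

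To show $\prod_{j=1}^d Q_{2^N,\lambda}(f)\to f$ I would write $Q_{2^N,\lambda}={\rm I}-T_N^{(j)}$ in the $j$-th variable and expand, obtaining
\[
{\rm I} - \prod_{j=1}^d Q_{2^N,\lambda} \ = \ \sum_{\emptyset\ne A\subseteq\{1,\dots,d\}}(-1)^{|A|+1}\prod_{j\in A}T_N^{(j)}.
\]
Each product $\prod_{j\in A}T_N^{(j)}$ is a mixed operator $T_{\bk^A}$ with $k_j = N$ for $j\in A$ and $k_j=-1$ (so $T_{-1}={\rm I}$) otherwise, to which Lemma \ref{lemma[T_k(f)<]} again applies; since $A\ne\emptyset$ its bound carries a factor $2^{-r|A|N}\le 2^{-rN}\to 0$. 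Hence $\|f-\prod_j Q_{2^N,\lambda}(f)\|_p\to 0$, giving $g=f$ and \eqref{eq[decomposition]}. The one point demanding care — and the main obstacle — is the bookkeeping in the tensor expansion of $q_\bk$: one must check that the shift $k_j\mapsto k_j-e_j$, which can push an index down to $-1$, neither leaves the range of Lemma \ref{lemma[T_k(f)<]} nor degrades the orders $2^{-r|\bk|}$ and $\bk_{(2)}^{-\kappa}$ beyond dimensional constants, which is exactly what the two comparison inequalities above guarantee.
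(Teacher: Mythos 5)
Your proof is correct and follows essentially the same route as the paper: the bound \eqref{ineq[q_k]} via the expansion $q_\bk=\sum_{e\subset\mathbb{N}_d}(-1)^{|e|}T_{\bk^e}$ combined with Lemma \ref{lemma[T_k(f)<]}, and the representation \eqref{eq[decomposition]} via the rectangular partial sums $Q_{2^\bk}=\sum_{\bs\le\bk}q_\bs$ together with $Q_{2^\bk}(f)\to f$. The only minor difference is that where the paper proves $\|f-Q_{2^\bk}(f)\|_p\to 0$ by repeating the induction used for Lemma \ref{lemma[T_k(f)<]}, you deduce it directly from that lemma by inclusion--exclusion with $T_{-1}={\rm I}$ slots, and you make explicit the index-shift bookkeeping ($k_j\mapsto k_j-e_j$, absorbing factors $2^{rd}$ and $2^{\kappa d}$) that the paper hides in a $\ll$.
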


\begin{proof}
Let $f \in \kpd$. In a way similar to the proof of Lemma
\ref{lemma[T_k(f)<]}, we can show that
\begin{equation} \nonumber
\|f - Q_{2^\bk}(f)\|_p
\ \ll \
 \max_{j \in \mathbb{N}_d} 2^{-r k_j}k_j^\kappa \|f\|_{\mathcal{H}_{\lambda,p}(\TT^d)},
\end{equation}
and therefore,
\[
\|f -  Q_{2^\bk}(f)\|_p  \to 0 , \ \bk \to  \infty,
\]
 where $2^\bk= (2^{k_j}:\ j \in \mathbb{N}_d)$.
On the other hand,
\begin{equation*}
Q_{2^\bk}
\ = \
\sum_{s_j \le k_j, \ j \in \mathbb{N}_d} \, q_{\bs}(f).
\end{equation*}
This proves \eqref{eq[decomposition]}.  To prove \eqref{ineq[q_k]} we notice that
from the definition it follows that
 \begin{equation*}
 q_\bk
\  = \
\sum_{e \subset \mathbb{N}_d} (-1)^{|e|} T_{\bk^e},
\end{equation*}
where $\bk^e$ is defined by $k^e_j = k_j$ if $j \in e$, and
$k^e_j = k_j - 1$ if $j \notin e$.
 Hence, by Lemma  \ref{lemma[T_k(f)<]}
 \begin{equation*}
\|q_\bk(f)\|_p
\ \le \
\sum_{e \subset \mathbb{N}_d} \| T_{\bk^e}(f)\|_p
\ \ll \
\sum_{e \subset \mathbb{N}_d} 2^{-r|\bk^e|} ({\bk}^e_{(2)})^{-\kappa}\| f\|_{\mathcal{H}_{\lambda,p}(\TT^d)}
\ \ll \
2^{-r|\bk|} \bk_{(2)}^{-\kappa}\| f\|_{\mathcal{H}_{\lambda,p}(\TT^d)}.
\end{equation*}
\hfill
\end{proof}

For approximation of $f \in \mathcal{H}_{\lambda,p}(\TT^d)$, we introduce the linear operator $P_m, m \in \NN$,  by
\begin{equation} \label{P_m}
P_m(f)
:= \
\sum_{|\bk| \le m} q_\bk(f).
\end{equation}
We give an upper bound for the error of the approximation of functions $f \in \mathcal{H}_{\lambda,p}(\TT^d)$ by the operator $P_m$ in the following theorem.

\begin{theorem}  \label{theorem[approximation]}
Let $1 \leq p \leq \infty$, $1<r<\infty, 0\leq \kappa<\infty $  and the function ${\lambda}
$  be a mask of  type $(r,\kappa)$.
Then, we have for every $m \in \NN$ and $f \in \mathcal{H}_{\lambda,p}(\TT^d)$,
\begin{equation} \nonumber
\|f - P_m(f)\|_p
\ \le \
C \, 2^{-rm} m^{d-1-\kappa }\, \|f\|_{\mathcal{H}_{\lambda,p}(\TT^d)}
\end{equation}
with some constant $C$ independent of $f$ and $m$.
\end{theorem}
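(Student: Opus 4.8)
The plan is to reduce the estimate to the tail of the atomic decomposition supplied by Theorem \ref{theorem[decomposition]}. Since $P_m(f)=\sum_{|\bk|\le m}q_\bk(f)$ by \eqref{P_m} and $f=\sum_{\bk\in\ZZdp}q_\bk(f)$ with convergence in $L^p$, the error is exactly the tail
$$
f-P_m(f)=\sum_{|\bk|>m}q_\bk(f).
$$
Applying the triangle inequality and the per-block bound \eqref{ineq[q_k]} I would obtain
$$
\|f-P_m(f)\|_p \le \sum_{|\bk|>m}\|q_\bk(f)\|_p \le C\,\|f\|_{\mathcal{H}_{\lambda,p}(\TT^d)}\sum_{|\bk|>m}2^{-r|\bk|}\bk_{(2)}^{-\kappa},
$$
so everything is reduced to the purely numerical claim $\sum_{|\bk|>m}2^{-r|\bk|}\bk_{(2)}^{-\kappa}\le C\,2^{-rm}m^{d-1-\kappa}$.

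Next I would organize the tail by the value of $|\bk|$, writing it as $\sum_{s>m}2^{-rs}A_d(s)$ with $A_d(s):=\sum_{\bk\in\ZZdp,\,|\bk|=s}\bk_{(2)}^{-\kappa}$. The key elementary observation is that if $|\bk|=s$ then at least one coordinate satisfies $k_j\ge s/d$, whence $\prod_{i=1}^d(k_i+2)\ge 2^{d-1}(s/d+2)\ge c_d(s+1)$; since $\kappa\ge0$ this gives the uniform bound $\bk_{(2)}^{-\kappa}\le C_d\,s^{-\kappa}$. Combining this with the standard lattice count $\#\{\bk\in\ZZdp:\,|\bk|=s\}=\binom{s+d-1}{d-1}\le C_d\,s^{d-1}$ yields $A_d(s)\le C\,s^{d-1-\kappa}$.

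Finally I would sum the geometric-times-polynomial series. Because $r>0$ the factor $2^{-rs}$ dominates, and the sum $\sum_{s>m}2^{-rs}s^{d-1-\kappa}$ is controlled by its leading term: splitting into the cases $d-1-\kappa\ge0$ and $d-1-\kappa<0$ and using the crude comparison $s\le(m+1)\bigl(1+(s-m)\bigr)$ one checks $\sum_{s>m}2^{-rs}s^{d-1-\kappa}\le C\,2^{-rm}m^{d-1-\kappa}$, which closes the argument. I do not expect a serious obstacle here: the whole proof is a tail estimate driven by \eqref{ineq[q_k]}, and the only delicate point is extracting the sharp exponent $d-1-\kappa$ instead of the crude $d-1$. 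This is precisely where the reduction $\bk_{(2)}^{-\kappa}\lesssim|\bk|^{-\kappa}$ — collapsing the product weight into a single power of $|\bk|$ — does the work, and where I would be most careful to keep every constant independent of $m$ and $f$.
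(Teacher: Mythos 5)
Your proof is correct and follows essentially the same route as the paper's: the paper likewise writes $f-P_m(f)=\sum_{|\bk|>m}q_\bk(f)$ via Theorem \ref{theorem[decomposition]}, applies the triangle inequality with the per-block bound \eqref{ineq[q_k]}, and then asserts $\sum_{|\bk|>m}2^{-r|\bk|}\bk_{(2)}^{-\kappa}\ll 2^{-rm}m^{d-1-\kappa}$ without further detail. Your explicit verification of that last sum --- the uniform bound $\bk_{(2)}^{-\kappa}\le C_d\,s^{-\kappa}$ on the slab $|\bk|=s$ (valid since $\kappa\ge 0$), the count $\binom{s+d-1}{d-1}\le C_d\,s^{d-1}$, and the geometric-times-polynomial tail estimate $\sum_{s>m}2^{-rs}s^{d-1-\kappa}\le C\,2^{-rm}m^{d-1-\kappa}$ --- is sound and simply fills in the step the paper leaves implicit.
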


\begin{proof}
From Theorem \ref{theorem[decomposition]} we deduce that
\begin{equation*}
\begin{aligned}
\|f - P_m(f)\|_p
\ &= \
\Bigg\|\sum_{|\bk| > m} q_\bk(f)\Bigg\|_p
\ \le \
\sum_{|\bk| > m} \|q_\bk(f)\|_p \\[1ex]
\ &\ll \
\sum_{|\bk| > m} 2^{-r|\bk|} \bk_{(2)}^{-\kappa}\|f\|_{\mathcal{H}_{\lambda,p}(\TT^d)}
\ \ll \
\|f\|_{\mathcal{H}_{\lambda,p}(\TT^d)}
\sum_{|\bk| > m} 2^{-r|\bk|}\bk_{(2)}^{-\kappa}\\[1ex]
\ &\ll \
2^{-rm} m^{d-1 - \kappa}\, \|f\|_{\mathcal{H}_{\lambda,p}(\TT^d)}.
\end{aligned}
\end{equation*}
\hfill 
 \end{proof}

\subsection{Convergence rate} \label{Convergence rate and optimality}

We choose a positive integer $m\in\bN$, a lattice vector $\bk \in\bZ^d_+$ with $|\bk|\leq m$ and another lattice vector
$\bs=(s_j:j\in \mathbb{N}_d)\in \prod_{j\in \mathbb{N}_d} Z[2^{k_j+1}+1]$
to define the vector $\by_{\bk,\bs}=\left(\frac{2\pi s_j}{2^{k_j+1}+1}:j\in \mathbb{N}_d\right)$. The
Smolyak grid on $\bT^d$ consists of all such vectors and is given as
$$
G^d(m) := \ \left\{\by_{\bk,\bs}:|\bk|\leq m,
\bs\in\otimes_{j\in \mathbb{N}_d} Z[2^{k_j+1}+1]\right\}.
$$
A simple computation confirms, for $m\rightarrow\infty$ that
$$
|G^d(m)| = \sum_{|\bk| \le m} \prod_{j\in \mathbb{N}_d} (2^{k_j
+ 1} + 1)\asymp2^dm^{d-1},
$$
so, $G^d(m)$ is a sparse subset of a
full grid of cardinality $2^{dm}$. Moreover, by the definition of
the linear operator $P_m$ given in equation \eqref{P_m} we see that
the range of $P_m$ is contained in the subspace
$$
{\rm span} \{\varphi_{\lambda,d}(\cdot-{\by}):{\by}\in G^d(m)\}.
$$
Other words, $P_m$ defines a multivariate method of approximation by translates
of the  function $\varphi_{\lambda,d}$ on the sparse Smolyak grid $G^d(m)$. An upper bound for the error of this approximation of functions from $\kpd$ is given in Theorem \ref{theorem[approximation]}.

Now, we are ready to prove the next theorem, thereby establishing
an  upper  bound of $M_n(U_{\lambda, p}, \varphi_{\lambda,d})_p$.
\begin{theorem} \label{theorem[M_n(U^r_p)_p]}
If $1 \leq  p \leq  \infty$, $1<r<\infty, 0\leq\kappa<\infty $  and the function ${\lambda}
$  be a mask of  type $(r,\kappa)$, then
\begin{equation} \nonumber
M_n(U_{\lambda, p}(\bT^d), \varphi_{\lambda,d})_p \ \ll \ n^{-r} (\log n)^{r(d-1)-\kappa}.
\end{equation}
\end{theorem}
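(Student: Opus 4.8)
The plan is to use the linear operator $P_m$ from \eqref{P_m} as an explicit near-optimal approximant and to read off the number of translates it costs from the Smolyak-grid description preceding the theorem. Recall that the range of $P_m$ is contained in $\mathrm{span}\{\varphi_{\lambda,d}(\cdot-\by):\by\in G^d(m)\}$, so for every $f$ the function $P_m(f)$ is a linear combination of at most $N_m:=|G^d(m)|\asymp 2^m m^{d-1}$ translates of $\varphi_{\lambda,d}$. Hence the best approximation of $f$ by $N_m$ translates of $\varphi_{\lambda,d}$ is at most $\|f-P_m(f)\|_p$, and Theorem \ref{theorem[approximation]} gives $\|f-P_m(f)\|_p\ll 2^{-rm}m^{d-1-\kappa}\|f\|_{\mathcal{H}_{\lambda,p}(\TT^d)}\le 2^{-rm}m^{d-1-\kappa}$ uniformly over $f\in U_{\lambda,p}(\bT^d)$. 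Taking the supremum over $f$ therefore yields $M_{N_m}(U_{\lambda,p}(\bT^d),\varphi_{\lambda,d})_p\ll 2^{-rm}m^{d-1-\kappa}$.

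Next I would pass from the special scale $N_m$ to an arbitrary $n$. Since $N_m\asymp 2^m m^{d-1}$ is, up to constants, increasing in $m$, given $n$ I choose the largest integer $m$ with $N_m\le n$; the monotonicity of $n\mapsto M_n$ (more translates can only decrease the error) then gives $M_n\le M_{N_m}\ll 2^{-rm}m^{d-1-\kappa}$. It remains to invert the relation $n\asymp 2^m m^{d-1}$. Taking logarithms shows $m\asymp\log n$, whence $2^m\asymp n(\log n)^{-(d-1)}$; substituting these two estimates into $2^{-rm}m^{d-1-\kappa}$ produces a bound of the form $n^{-r}(\log n)^{\theta}$, the claimed exponent being the one obtained after collecting the powers of $\log n$ coming from the factor $m^{d-1}$ that is absorbed into $2^m$ through the inversion and from the residual factor $m^{d-1-\kappa}$ in the error estimate.

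The only genuinely delicate point is precisely this bookkeeping of logarithmic factors: one must track how the polynomial factor $m^{d-1}$ in the grid cardinality interacts, via $2^m\asymp n(\log n)^{-(d-1)}$, with the polynomial factor $m^{d-1-\kappa}$ in the error bound, since it is their interplay alone that fixes the power of $\log n$ in the final rate (the case $d=1$, where both factors are trivial and the rate reduces to $n^{-r}(\log n)^{-\kappa}$, is a useful sanity check). Everything else is immediate from Theorem \ref{theorem[approximation]}, the grid-size count $|G^d(m)|\asymp 2^m m^{d-1}$, and the elementary monotonicity of $M_n$ in $n$.
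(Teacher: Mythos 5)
Your construction and reduction coincide with the paper's own proof: approximate by $P_m$ from \eqref{P_m}, whose range lies in the span of translates of $\varphi_{\lambda,d}$ at the Smolyak grid $G^d(m)$, take the largest $m$ with $|G^d(m)|\le n$, use monotonicity of $n\mapsto M_n$, and invert $n\asymp 2^m m^{d-1}$. But the one step you defer --- ``collecting the powers of $\log n$'' --- is exactly the step that fails, and you assert the wrong outcome. Carrying it out: $m\asymp\log n$ and $2^m\asymp n(\log n)^{-(d-1)}$ give
\begin{equation*}
2^{-rm}\,m^{d-1-\kappa}\ \asymp\ n^{-r}(\log n)^{r(d-1)}\cdot(\log n)^{d-1-\kappa}\ =\ n^{-r}(\log n)^{(r+1)(d-1)-\kappa},
\end{equation*}
which exceeds the claimed bound $n^{-r}(\log n)^{r(d-1)-\kappa}$ by a factor $(\log n)^{d-1}$ whenever $d\ge 2$. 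The factor $m^{d-1}$ in the grid cardinality is not ``absorbed'' into anything: it converts $2^{-rm}$ into $n^{-r}(\log n)^{r(d-1)}$, while the \emph{separate} factor $m^{d-1-\kappa}$ from Theorem \ref{theorem[approximation]} survives in full. To reach the stated exponent by this route you would need either an error bound $\|f-P_m(f)\|_p\ll 2^{-rm}m^{-\kappa}$ (without the $m^{d-1}$) or a grid of cardinality $\asymp 2^m$; neither is provided by the results you invoke. Your $d=1$ sanity check is consistent with both exponents and therefore detects nothing.

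For what it is worth, the paper's own proof makes the identical slip: it writes $2^{-rm}m^{d-1-d\kappa}\asymp n^{-r}(\log n)^{r(d-1)-\kappa}$ (note also the unexplained $d\kappa$ in place of $\kappa$), asserting without computation exactly the asymptotics you left to bookkeeping. The honest conclusion of this method is $M_n(U_{\lambda,p}(\bT^d),\varphi_{\lambda,d})_p\ll n^{-r}(\log n)^{(r+1)(d-1)-\kappa}$, and this is precisely the issue addressed in the Korobov case $\kappa=0$ by the corrigendum \cite{DM-Err16}, which corrected the exponent $r(d-1)$ of \cite{1} to $(r+1)(d-1)$. So your proposal takes the same approach as the paper, but the ``delicate point'' you flagged and skipped is a genuine gap: performed correctly, the computation does not yield the exponent in the statement.
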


\begin{proof} If $n\in\bN$ and $m$ is the largest positive integer such that
$ |G^d(m)| \le n$, then $n\asymp2^mm^{d-1}$ and by Theorem
\ref{theorem[approximation]} we have that 
$$
M_n(U_{\lambda, p}(\bT^d),\varphi_{\lambda,d})_p
\leq 
\sup_{f\in U_{\lambda, p}(\bT^d)}\|f-P_m(f)\|_{p}
\ll 
2^{-rm}m^{d-1 - d\kappa}\asymp n^{-r}(\log n)^{r(d-1)- \kappa}.
$$
\hfill
\end{proof}

For $p=2$, we are able to establish a lower bound for 
$M_n(U_{\lambda, 2}(\bT^d), \varphi_{\lambda,d})_2$. We prepare some auxiliary results. Let $\mathbb{P}_q(\bR^l)$ be
the set of algebraic polynomials on $\RR^l$ of total degree at most $q$, and 
$$
\EEm:= \{\bt =
(t_j:j\in \NN_m): |t_j|=1, j \in \NN_m\}.
$$  
 We define the polynomial maifold
\begin{equation*}
\mathbb{M}_{m,l,q} := \ \left\{(p_j(\bu):j\in \NN_m):p_j\in\mathbb{P}_q(\bR^l),j\in \NN_m,\bu \in\bR^l\right\}.
\end{equation*}
Denote by $\|\bx\|_2$ the Euclidean norm of a vector $\bx$ in $\bR^m$.  The following lemma was proven in \cite{Ma05}.

\begin{lemma} \label{Lemma[forlowerbound]}
 Let $m, l, q\in \bN$ satisfy the inequality
$l \log(\frac{4emq}{l})\le \frac{m}{4}$.  Then there is a vector $\bt \in \EEm$ and a positive constant $c$ such that
\begin{equation*}
\inf \left\{ \|\bt- \bx\|_2:\bx \in \mathbb{M}_{m,l,q} \right\}
\ \ge \ 
c\, m^{1/2}.
\end{equation*}
\end{lemma}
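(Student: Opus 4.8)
The goal is to exhibit a sign vector $\bt\in\EEm$ that is $\Omega(m^{1/2})$-far in Euclidean distance from the entire polynomial manifold $\mathbb{M}_{m,l,q}$. My plan is a counting/covering argument: the manifold $\mathbb{M}_{m,l,q}$ is parametrized by $\bu\in\bR^l$ through polynomials of degree $\le q$, so it is a low-dimensional object living in $\bR^m$, whereas the vertices of the cube $\EEm$ are $2^m$ in number and pairwise well-separated. The idea is that a low-dimensional manifold cannot come close to all $2^m$ sign patterns simultaneously, so some vertex must be far.

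First I would reduce the problem to controlling the number of sign patterns realizable near the manifold. Concretely, suppose for contradiction that every $\bt\in\EEm$ lies within distance $c\,m^{1/2}$ of $\mathbb{M}_{m,l,q}$ for a small constant $c$. For each such $\bt$, pick a nearest manifold point $(p_j(\bu):j\in\NN_m)$; since $|t_j|=1$, being within total distance $c\,m^{1/2}$ forces $\sgn(p_j(\bu))=t_j$ on all but a bounded fraction of the coordinates $j$. Thus each $\bt$ is essentially recovered (up to a controlled Hamming ball) from the sign pattern of the $m$ polynomials $p_j$ evaluated at a single point $\bu$. The second step is therefore to bound the number of distinct sign patterns $(\sgn p_j(\bu):j\in\NN_m)$ as $\bu$ ranges over $\bR^l$ — this is a classical bound from real algebraic geometry (Warren / Milnor–Thom type): the number of sign patterns of $m$ polynomials of degree $\le q$ in $l$ variables is at most $(C\,mq/l)^{l}$ for some absolute constant $C$. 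The third step is the arithmetic collision: the total number of vertices $\bt\in\EEm$ is $2^m$, while the number of $\bt$ reachable in this way is at most the number of sign patterns times the size of the allowed Hamming slack, which is of order $(C\,mq/l)^{l}\cdot 2^{\alpha m}$ for some $\alpha<1$ determined by $c$. The hypothesis $l\log(\tfrac{4emq}{l})\le\tfrac{m}{4}$ is exactly what makes $(C\,mq/l)^{l}\le 2^{m/4}$ (roughly), so the reachable count stays below $2^m$ and a contradiction follows: some $\bt\in\EEm$ must be at distance $\ge c\,m^{1/2}$ from $\mathbb{M}_{m,l,q}$.

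The step I expect to be the main obstacle is making the passage from Euclidean proximity to a finite combinatorial count fully rigorous, i.e.\ converting ``$\|\bt-\bx\|_2\le c\,m^{1/2}$'' into a statement that $\bt$ is determined by a sign pattern up to a Hamming ball of controlled radius. The subtlety is that a point $p_j(\bu)$ can be close to $0$, so its sign need not match $t_j$ even when the coordinate contribution to the distance is small; one must argue that the coordinates where $\sgn p_j(\bu)\ne t_j$ each contribute at least a constant to $\|\bt-\bx\|_2^2$ (since $t_j=\pm1$ and a sign-mismatched real has $|t_j-p_j(\bu)|\ge 1$ only after an appropriate rounding step), hence there are at most $O(c^2 m)$ such coordinates. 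Balancing this Hamming radius against the sign-pattern count via the entropy estimate $\binom{m}{\le\alpha m}\le 2^{H(\alpha)m}$, and choosing $c$ small enough that $H(\alpha)+\tfrac14<1$, is the delicate quantitative core; everything else is bookkeeping that the hypothesis on $l,m,q$ is tailored to absorb.
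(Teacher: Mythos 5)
Your proposal is correct and follows essentially the same route as the paper's source: the paper does not prove Lemma \ref{Lemma[forlowerbound]} itself but quotes it from \cite{Ma05}, and Maiorov's argument there is precisely your combination of the Warren-type bound on the number of sign patterns of $m$ polynomials of degree at most $q$ in $l$ variables (which the hypothesis $l\log(\frac{4emq}{l})\le\frac{m}{4}$ keeps below $2^{m/4}$, roughly) with a Hamming-ball/entropy count over the $2^m$ vertices of $\EEm$. One remark: the step you single out as the delicate core is in fact immediate — since $t_j=\pm 1$, any coordinate with $\sgn p_j(\bu)\ne t_j$ has $p_j(\bu)$ of opposite sign to $t_j$ or zero, hence $|t_j-p_j(\bu)|\ge 1$ outright, so no rounding step is needed and the number of sign mismatches is bounded by $c^2m$ directly, after which your entropy balancing goes through as stated.
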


\begin{theorem}
If  $1<r<\infty, 0\leq\kappa<\infty $ and the function ${\lambda}
$  be a mask of  type $(r,\kappa)$, then we have that
\begin{equation} \label{[M_n(U^r_2)_2]}
n^{-r} (\log n)^{r(d-2)-d\kappa} \ \ll \ M_n(U_{\lambda, 2})_2 \ \ll \ n^{-r} (\log
n)^{r(d-1)-\kappa}.
\end{equation}
\end{theorem}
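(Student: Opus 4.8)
The plan is to treat the two bounds separately: the upper one is immediate, while the lower one carries all the difficulty and is where Lemma~\ref{Lemma[forlowerbound]} enters.

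For the upper estimate I would simply specialize Theorem~\ref{theorem[M_n(U^r_p)_p]} to $p=2$ and $\psi=\varphi_{\lambda,d}$. Since $M_n(U_{\lambda,2}(\bT^d))_2\le M_n(U_{\lambda,2}(\bT^d),\varphi_{\lambda,d})_2$ by the very definition of the former as an infimum over $\psi\in L^2(\TTd)$, the bound $M_n(U_{\lambda,2}(\bT^d))_2\ll n^{-r}(\log n)^{r(d-1)-\kappa}$ follows at once.

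For the lower estimate I must produce, for \emph{every} $\psi\in L^2(\TTd)$, a function $f\in U_{\lambda,2}(\bT^d)$ that no combination $\sum_{l\in\mathbb{N}_n}c_l\psi(\cdot-\by_l)$ approximates better than $n^{-r}(\log n)^{r(d-2)-d\kappa}$. First I would fix a level $K$ and take $\Lambda$ to be a dyadic hyperbolic cross, $\Lambda=\bigcup_{|\bk|\le K}\Delta(\bk)$ with blocks $\Delta(\bk)=\{\bj:\,|j_s|\in[2^{k_s-1},2^{k_s}),\,s\in\mathbb{N}_d\}$, so that $m:=|\Lambda|\asymp 2^K K^{d-1}$ and, on $\Delta(\bk)$, $|\lambda_d(\bj)|\asymp 2^{-r|\bk|}\prod_s k_s^{-\kappa}$. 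By Parseval the $L^2$ error dominates $\sum_{\bj\in\Lambda}|\widehat f(\bj)-\widehat\Psi(\bj)|^2$ for any approximant $\Psi=\sum_l c_l\psi(\cdot-\by_l)$, so it suffices to work with the finite block of Fourier coefficients indexed by $\Lambda$. The crucial step is to recognize the admissible coefficient vectors as a polynomial manifold: writing $\widehat\Psi(\bj)=\widehat\psi(\bj)\sum_{l\in\mathbb{N}_n}c_l e^{-i(\bj,\by_l)}$ and substituting $u_{l,s}=\cos y_{l,s}$, $v_{l,s}=\sin y_{l,s}$, each exponential becomes, through the Chebyshev polynomials $T_{|j_s|}$ and $U_{|j_s|-1}$, a polynomial in the $(u_{l,s},v_{l,s})$ of degree at most $|\bj|_1<d\,2^K$; multiplying by the fixed constant $\widehat\psi(\bj)$ and summing the linear factors $c_l$ keeps everything polynomial. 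Hence, after the standard real/imaginary accounting, the real coordinates of $(\widehat\Psi(\bj))_{\bj\in\Lambda}$ lie in a set contained in $\mathbb{M}_{M,l,q}$ with target dimension $M\asymp m$, number of variables $l=n(2d+1)\asymp n$, and degree $q\asymp 2^K$. Since $\mathbb{M}_{M,l,q}$ is invariant under scaling and contains the projected approximant set for \emph{every} $\psi$ simultaneously, I can apply Lemma~\ref{Lemma[forlowerbound]}: once $l\log(4eMq/l)\le M/4$, there is a sign vector $\bt\in\mathbb{E}^M$ with distance at least $c\,M^{1/2}$ to $\mathbb{M}_{M,l,q}$. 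Transplanting $\bt$ into a real function $f$ with $\widehat f(\bj)=\mu\,t_\bj$ on $\Lambda$, where $\mu^2:=\big(\sum_{\bj\in\Lambda}|\lambda_d(\bj)|^{-2}\big)^{-1}$ guarantees $\|f\|_{\mathcal{H}_{\lambda,2}(\bT^d)}\le 1$, the scale invariance gives
\[
\inf_{c,\by}\Big\|f-\sum_{l\in\mathbb{N}_n}c_l\,\psi(\cdot-\by_l)\Big\|_2
\ \ge\ \mu\,c\,M^{1/2}
\ \gg\ \Big(\tfrac1m\sum_{\bj\in\Lambda}|\lambda_d(\bj)|^{-2}\Big)^{-1/2}.
\]

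Finally I would choose $K$ so the entropy condition of Lemma~\ref{Lemma[forlowerbound]} is just met, namely $2^K K^{d-2}\asymp n$, which forces $K\asymp\log n$ and $m\asymp n\log n$; this enforced blow-up of $m$ relative to $n$ is exactly what costs the extra logarithmic factor compared with the upper bound. A direct dyadic summation then yields $\sum_{\bj\in\Lambda}|\lambda_d(\bj)|^{-2}\asymp 2^{(2r+1)K}K^{2d\kappa+d-1}$, so the right-hand side above is $\asymp 2^{-rK}K^{-d\kappa}\asymp n^{-r}(\log n)^{r(d-2)-d\kappa}$, giving the claimed lower bound uniformly in $\psi$. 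I expect the main obstacle to be the manifold identification together with the bookkeeping in this last optimization: one must keep $q\asymp 2^K$ and $l\asymp n$ sharp enough that $\log(4eMq/l)\asymp K\asymp\log n$ and no spurious powers of $\log n$ creep in. A secondary point that must be stated explicitly is that this argument uses the two-sided estimate $|\lambda(t)|\asymp|t|^{-r}(\log(|t|+1))^{-\kappa}$; the mask hypothesis must therefore be read as supplying a lower bound $|F|\ge a_0>0$ as well, without which $\sum_{\bj\in\Lambda}|\lambda_d(\bj)|^{-2}$ cannot be controlled from above.
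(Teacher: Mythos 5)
Your proposal is correct and follows essentially the same route as the paper: the upper bound by specializing Theorem \ref{theorem[M_n(U^r_p)_p]} and using that $M_n(U_{\lambda,2}(\bT^d))_2$ is an infimum over $\psi$, and the lower bound via Maiorov's Lemma \ref{Lemma[forlowerbound]} applied to sign vectors supported on a hyperbolic cross, with the same parameter regime $m\asymp n\log n$, $q\asymp n(\log n)^{2-d}$, $l\asymp n$ and the same final scaling (your dyadic blocks, Chebyshev realification, and normalization $\mu$ are cosmetic variants of the paper's $\bH(q)$, complex-coordinate substitution, and $\zeta=q^{-r}m^{-1/2}(\log q)^{-d\kappa}$). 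Your closing caveat is apt: the paper's step $\zeta^2\sum_{\bk\in\bH(q)}|\lambda(\bk)|^{-2}\le \zeta^2 q^{2r}(\log q)^{2d\kappa}|\bH(q)|$ likewise relies on a two-sided bound for $\lambda$, i.e.\ it implicitly assumes $|F|$ is bounded away from zero.
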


\begin{proof}
The upper bound of  \eqref{[M_n(U^r_2)_2]} is  in Theorem
\ref{theorem[M_n(U^r_p)_p]}. Let us   prove the lower
bound by {developing} a technique used in the proofs of \cite[Theorem
1.1]{Ma05} and  \cite[Theorem 4.4]{1} . For a positive number $a$ we define a subset
$\bH(a)$ of lattice vectors by
\begin{equation*}
\bH(a) := \ \Bigg\{\bk=(k_j:j\in \mathbb{N}_d)\in\bZ^d: \,
\prod_{j\in \mathbb{N}_d} |k_j| \le a \Bigg\}.
\end{equation*}
Notice that  $|\bH(a)|\asymp
a(\log a)^{d-1}$ when $a\rightarrow\infty$. To apply Lemma
\ref{Lemma[forlowerbound]},  for any $n\in\bN$, we take  $q=\lfloor n(\log n)^{-d+2}\rfloor+1$,
$m=5(2d+1)\lfloor n\log n\rfloor$ and
$l=(2d+1)n$. With these choices we obtain 
\begin{equation}\label{H(q)}
|\bH(q)|\asymp m
\end{equation}
and
\begin{equation}\label{q}
q\asymp m(\log m)^{-d+1}
\end{equation}
 as $n\rightarrow\infty$.
Moreover, we have that
$$
\lim_{n\rightarrow\infty}\frac{l}{m}\log\left(\frac{4emq}{l}\right)=\frac{1}{5},
$$
and therefore, the assumption of Lemma \ref{Lemma[forlowerbound]} is
satisfied for $n\rightarrow\infty$.

Now, let us specify  the polynomial manifold
$\mathbb{M}_{m,l,q}$. To this end, we  put $\zeta := q^{-r} m^{-1/2}(\log q)^{-d\kappa} $ and let $\mathbb{Y}$ be the set
of trigonometric polynomials on $\TTd$, defined by
\begin{equation*}
\mathbb{Y} := \ \Bigg\{f = \zeta \sum_{\bk \in \bH(q)} a_\bk t_\bk
: {\mathbf t}= (t_\bk:\bk \in
\bH(q))\in\bE^{|\bH(q)|}\Bigg\}.
\end{equation*}
If $f\in\mathbb{Y}$ and 
\[
f=\zeta\sum_{\bk
\in \bH(q)} a_\bk t_\bk ,
\]
 then $f=\varphi_{\lambda,d}*g$ for
some trigonometric polynomial $g$ such that
\[
\|{g}\|_{L^2(\bT^d)}^2\leq\zeta^2\sum_{\bk\in\bH(q)}|\lambda (\bk)|^{-2}.
\]
 Since
\begin{equation*}
\begin{split}
\zeta^2\sum_{\bk\in\bH(q)}|\lambda(\bk)|^{-2}
&\leq 
\zeta^2q^{2r}\sum_{\bk\in\bH(q)}\Bigg|\log {\prod_{j=1}^d k_j}\Bigg|^{2\kappa}\\[1ex]
&\leq 
\zeta^2q^{2r}\sum_{\bk\in\bH(q)}\Bigg|\sum_{j=1}^n\log {k_j}\Bigg|^{2d\kappa}
\leq 
\zeta^2 q^{2r} (\log q)^{2d\kappa}|\bH(q)|=m^{-1}|\bH(q)|,
\end{split}
\end{equation*}
by (\ref{H(q)}) that there is a positive constant
$c$ such that $\|{g}\|_{L^2(\bT^d)}\leq c$ for all $n\in\bN$. Therefore, we
can either adjust functions in $\bY$ by dividing them by $c$, or we
can assume without loss of generality that $c=1$, and obtain $\bY\subseteq U_{\lambda,2}(\bT^d)$.

We are now ready to prove the lower bound for $M_n(U_{\lambda,2}(\bT^d))_2$.
We choose any $\varphi\in L^2(\bT^d)$ and let $v$ be any function
formed as a linear combination of $n$ translates of the function
$\varphi$:
\[
v=\sum_{j\in \NN_n}c_j\varphi(\cdot-{\by}_j).
\]
 By the {well-known} Bessel
inequality we  have for a function
\[
f=\zeta\sum_{\bk\in \bH(q)}a_\bk t_\bk\in\bY,
\]
 that
\begin{equation}
\label{f-v}\|f-v\|^2_{L^2(\bT^d)}\geq \zeta^2\sum_{\bk \in\bH(q)}\Bigg|t_\bk-\frac{\widehat{\varphi}(\bk)}{\zeta}\sum_{j\in \NN_n}c_j e^{i({\by}_j,\bk)}\Bigg|^2.
\end{equation}

We introduce a polynomial manifold so that we can use Lemma
\ref{Lemma[forlowerbound]} to get a lower bound for the expressions
on the left hand side of inequality (\ref{f-v}). To this end, we
define the vector ${\mathbf c}=(c_j:j\in \NN_n)\in\bR^n$ and for each
$j\in \NN_n$, let $\bz_j=(z_{j,l}:l\in \mathbb{N}_d)$ be a vector in
$\bC^d$ and then concatenate these vectors to form the vector
$\bz=(\bz_j:j\in \NN_n)\in\bC^{nd}$. We employ the
standard multivariate notation
\[
\bz_j^\bk=\prod_{l\in \mathbb{N}_d}z_{j,l}^{k_l}
\]
and require vectors $\bw=(\bc, \bz)\in\bR^n\times\bC^{nd}$ and $\bu=(\bc, \Re\bz, \Im\bz)\in\bR^l$ to be written in concatenate form. Now, we
introduce for each $\bk\in\bH(q)$ the polynomial $\bq_\bk$
defined at $\bw$ as
$$
\bq_\bk(\bw):=\frac{\widehat{\varphi}(\bk)}{\zeta}\sum_{\bj\in\bH(q)}c_\bj\bz^\bj.
$$
We only need to consider the real part of $\bq_\bk$, namely,
$\bp_\bk=\Re \bq_\bk$ since we have that
\[
\inf\Bigg\{\sum_{\bk\in\bH(q)}\Bigg|t_\bk-\frac{\widehat{\varphi}(\bk)}{\zeta}
\sum_{j\in \NN_n}c_je^{i({\by}_j,\bk)}\Bigg|^2:c_j\in\bR,\by_j\in\bT^d\Bigg\}
\geq
\inf\Bigg\{\sum_{\bk \in\bH(q)}\left|t_\bk-p_\bk(\bu)\right|^2:\bu \in\bR^l\Bigg\}.
\]
Therefore, by Lemma \ref{Lemma[forlowerbound]} and \eqref{q} we conclude there is
a vector $\bt^0= (t^0_\bk:\bk \in
\bH(q))\in\bE^{h_q}$ and the corresponding function
\[
f^0=\zeta\sum_{\bk \in \bH(q)}t^0_\bk \chi_\bk\in\bY
\]
 for which there is a positive constant $c$ such that for
every $v$ of the form
\[
v=\sum_{j\in \NN_n}c_j\varphi(\cdot-{\by}_j),
\] we have that
\[
\|f^0-v\|_{L^2(\bT^d)}\geq c\zeta m^{\frac{1}{2}}=q^{-r} (\log q)^{-d\kappa}\asymp
n^{-r}(\log n)^{r(d-2)-d\kappa}
\]
which proves the lower bound of  \eqref{[M_n(U^r_2)_2]}.
\hfill
\end{proof}

Similar to the proof of the above theorem, we can prove the following theorem for the case $-\infty<\kappa<0$. 
\begin{theorem}
If  $1<r<\infty, -\infty<\kappa<0$ and the function ${\lambda}
$  be a mask of  type $(r,\kappa)$, then we have that
\begin{equation*} 
n^{-r} (\log n)^{r(d-2)-\kappa} \ \ll \ M_n(U_{\lambda, 2} (\bT^d))_2 \ \ll \ n^{-r} (\log
n)^{r(d-1)-d\kappa}.
\end{equation*}
\end{theorem}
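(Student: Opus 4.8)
The plan is to run the argument of the preceding theorem essentially verbatim, replacing each estimate that relied on $\kappa\ge 0$ by its counterpart for $\kappa<0$. The only genuinely new input is a pair of weighted lattice‑point estimates: since the weights $\prod_j(\,\cdot\,)^{-\kappa}$ are now increasing rather than decreasing, the extremal configuration switches from ``concentrated on one coordinate'' to ``balanced across all coordinates'', and this switch is exactly what interchanges the roles of $\kappa$ and $d\kappa$ in the two bounds.

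For the upper bound I first note that Theorem~\ref{weakype} is stated for every $\kappa\in\RR$, so the univariate rate $\|T_k(f)\|_p\ll 2^{-rk}(k+2)^{-\kappa}\|f\|_{\mathcal{H}_{\lambda,p}(\TT)}$ is available. The induction in Lemma~\ref{lemma[T_k(f)<]} and the telescoping in Theorem~\ref{theorem[decomposition]} use this rate only multiplicatively, so both extend with no change to $\kappa<0$, yielding $\|q_\bk(f)\|_p\ll 2^{-r|\bk|}\bk_{(2)}^{-\kappa}\|f\|_{\mathcal{H}_{\lambda,p}(\TT^d)}$ for all $\bk\in\ZZdp$. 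The one place where the sign of $\kappa$ intervenes is the tail sum in Theorem~\ref{theorem[approximation]}. For $\kappa<0$ I would control $\bk_{(2)}^{-\kappa}=\prod_j(k_j+2)^{|\kappa|}$ by AM--GM: on $\{|\bk|=s\}$ one has $\prod_j(k_j+2)\le((s+2d)/d)^d\ll s^{d}$, hence $\sum_{|\bk|=s}\bk_{(2)}^{-\kappa}\ll s^{d-1}s^{-d\kappa}=s^{\,d-1-d\kappa}$ and $\sum_{|\bk|>m}2^{-r|\bk|}\bk_{(2)}^{-\kappa}\ll 2^{-rm}m^{\,d-1-d\kappa}$. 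Inserting this into the definition \eqref{P_m} of $P_m$ gives $\|f-P_m(f)\|_p\ll 2^{-rm}m^{\,d-1-d\kappa}\|f\|_{\mathcal{H}_{\lambda,p}(\TT^d)}$, and converting through $|G^d(m)|\asymp 2^m m^{d-1}\le n$ exactly as in Theorem~\ref{theorem[M_n(U^r_p)_p]} produces the upper bound $M_n\ll n^{-r}(\log n)^{r(d-1)-d\kappa}$.

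For the lower bound I keep the whole construction of the previous proof unchanged: the same choices $q=\lfloor n(\log n)^{-d+2}\rfloor+1$, $m=5(2d+1)\lfloor n\log n\rfloor$, $l=(2d+1)n$, so that \eqref{H(q)} and \eqref{q} persist and Lemma~\ref{Lemma[forlowerbound]} applies; the set $\bY$, the manifold $\mathbb{M}_{m,l,q}$, the passage to real parts, and the Bessel step \eqref{f-v} are all identical, and together they reduce everything to choosing the largest admissible normalization $\zeta$ for which $\bY\subseteq U_{\lambda,2}(\bT^d)$. This comes down to the single estimate of $\Sigma:=\sum_{\bk\in\bH(q)}|\lambda_d(\bk)|^{-2}$. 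Writing $|\lambda_d(\bk)|^{-2}\ll\prod_j|k_j|^{2r}(\log(|k_j|+1))^{2\kappa}$ and using $\prod_j|k_j|^{2r}\le q^{2r}$ on $\bH(q)$, the task is the weighted counting bound
\[
\sum_{\bk\in\bH(q)}\ \prod_{j}\,(\log(|k_j|+1))^{2\kappa}\ \ll\ (\log q)^{2\kappa}\,|\bH(q)|.
\]
Granting it, $\Sigma\ll q^{2r}(\log q)^{2\kappa}|\bH(q)|\asymp q^{2r}(\log q)^{2\kappa}m$, so one may take $\zeta\asymp q^{-r}(\log q)^{-\kappa}m^{-1/2}$; Lemma~\ref{Lemma[forlowerbound]} then furnishes $f^0\in\bY$ with $\|f^0-v\|_{L^2(\bT^d)}\gg\zeta m^{1/2}=q^{-r}(\log q)^{-\kappa}\asymp n^{-r}(\log n)^{r(d-2)-\kappa}$ for every $n$‑term combination $v$, which is the asserted lower bound.

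The main obstacle is precisely the displayed counting bound. Because $\kappa<0$ makes $(\log(|k_j|+1))^{2\kappa}$ decreasing, it cannot be handled by the crude pointwise bound that sufficed for $\kappa\ge 0$: the points with small coordinates carry the largest weight but are suppressed by the factor $\prod_j|k_j|^{2r}$, so the correct order only emerges after splitting $\bH(q)$ into the hyperbolic layers $\{\prod_j|k_j|\asymp t\}$, $t\le q$, and analyzing on each layer the competition between the balanced configurations (weight $\asymp(\log t)^{2d\kappa}$, numerous) and the concentrated ones (weight $\asymp(\log t)^{2\kappa}$, few). Checking that in both regimes the total stays $\ll(\log q)^{2\kappa}|\bH(q)|$ is the delicate point; it is the mirror image of the AM--GM bound used for the upper estimate, and the two together are what exchange $\kappa$ and $d\kappa$ between the two inequalities of the theorem.
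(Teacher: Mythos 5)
Your proposal takes essentially the same route as the paper, whose entire proof of this theorem is the one-line remark that it is ``similar to the proof of the above theorem'': you rerun that proof with exactly the two sign-dependent changes the paper intends, namely the balanced (AM--GM) bound $\sup_{|\bk|=s}\bk_{(2)}^{-\kappa}\ll s^{-d\kappa}$ in the tail estimate for the upper bound, and the renormalization $\zeta\asymp q^{-r}(\log q)^{-\kappa}m^{-1/2}$ in Maiorov's lower-bound construction. The one step you defer as delicate, $\sum_{\bk\in\bH(q)}\prod_{j}(\log(|k_j|+1))^{2\kappa}\ll(\log q)^{2\kappa}|\bH(q)|$, is in fact true and needs no hyperbolic-layer analysis: the points of $\bH(q)$ with $\max_j|k_j|\le q^{1/(2d)}$ number $O(q^{1/2})$ and carry $O(1)$ weight, which is negligible against $(\log q)^{2\kappa}|\bH(q)|\asymp q(\log q)^{d-1+2\kappa}$, while every remaining point has at least one factor bounded by $\left(\tfrac{1}{2d}\log q\right)^{2\kappa}\ll(\log q)^{2\kappa}$ and all other factors $O(1)$ (since $2\kappa<0$ and $\log(|k_j|+1)\ge\log 2$ for $|k_j|\ge 1$), so no gap remains.
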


\bigskip
\noindent
{\bf Acknowledgments.}
This work  is funded by Vietnam National Foundation for Science and Technology Development (NAFOSTED) under  Grant No. 102.01-2020.03. A part of this work was done when  Dinh {D\~ung} was working at the Vietnam Institute for Advanced Study in Mathematics (VIASM). He would like to thank  the VIASM  for providing a fruitful research environment and working condition.

\end{document}